\newtheorem{theorem}{Theorem}[section]
\newtheorem{lemma}[theorem]{Lemma}
\newtheorem{proposition}[theorem]{Proposition}
\theoremstyle{definition}
\newtheorem{definition}[theorem]{Definition}
\newenvironment{demode}
{\noindent {{\it Proof of }}}%
{\par \hfill \fbox{}}
\theoremstyle{theorem}
\newtheorem*{theorem*}{Main Theorem}
\newtheorem*{question}{Question}
\theoremstyle{definition}
\theoremstyle{remark}
\newtheorem{remark}[theorem]{Remark}
\numberwithin{equation}{section}
\newcommand{\norm}[1]{\left\lvert\left\lvert#1\right\rvert\right\rvert}
\DeclareMathOperator*{\Span}{span \;}
\newcommand{\ran}{\textnormal{ran}}
\newcommand{\pe}[1]{\langle#1\rangle}
\newcommand{\EL}{\mathcal{L}}
\newcommand{\C}{\mathbb{C}}
\newcommand{\N}{\mathbb{N}}
\newcommand{\D}{\mathbb{D}}
\newcommand{\T}{\mathbb{T}}
\newcommand{\PR}{\textnormal{Re}}
\newcommand{\al}{\alpha}
\newcommand{\sumn}{\sum_{n=1}^{\infty}}
\newcommand{\summ}{\sum_{m=1}^{\infty}}
\newcommand{\suml}{\sum_{\ell=1}^{\infty}}
\newcommand{\sumk}{\sum_{k=1}^{\infty}}
\newcommand{\y}{\textnormal{\textbf{y}}}
\newcommand{\R}{\mathbb{R}}
\newcommand{\x}{\textnormal{\textbf{x}} }
\newcommand{\sumi}{\sum_{i=1}^\infty}
\newcommand{\sumj}{\sum_{j=1}^\infty}
\newcommand {\conm}[1]{{\{#1\}'}}
\newcommand{\biconm}[1]{{\{#1\}''}}
\newcommand{\X}{\mathcal{X}}
\newcommand{\inte}{\text{int}}
\begin{document}
	\title[Trace class perturbations of normal operators]{Hyperinvariant subspaces for trace class perturbations \newline  of normal operators and decomposability}

	\author{Eva A. Gallardo-Guti\'{e}rrez}
	\address{Eva A. Gallardo-Guti\'errez \newline
		Departamento de An\'alisis Matem\'atico y Matem\'atica Aplicada,\newline
		Facultad de Matem\'aticas,
		\newline Universidad Complutense de
		Madrid, \newline
		Plaza de Ciencias N$^{\underbar{\Tiny o}}$ 3, 28040 Madrid,  Spain
		\newline
		and Instituto de Ciencias Matem\'aticas ICMAT (CSIC-UAM-UC3M-UCM),
		\newline Madrid,  Spain } \email{eva.gallardo@mat.ucm.es}

	\author{F. Javier Gonz\'alez-Doña}
		\address{F. Javier González-Doña \newline
			Departamento de Matemáticas, \newline
			Escuela Politécnica Superior, \newline
			 Universidad Carlos III de Madrid, \newline
			   Avenida de la Universidad 30, 28911 Leganés, Madrid, Spain
		   	\newline
		   and Instituto de Ciencias Matem\'aticas ICMAT (CSIC-UAM-UC3M-UCM),
		   \newline Madrid,  Spain }
	\email{fragonza@math.uc3m.es}

	\thanks{Both authors are partially supported by Plan Nacional  I+D grant no. PID2022-137294NB-I00, Spain,
		the Spanish Ministry of Science and Innovation, through the ``Severo Ochoa Programme for Centres of Excellence in R\&D'' (CEX2019-000904-S \& CEX2023-001347-S) and from the Spanish National Research Council, through the ``Ayuda extraordinaria a Centros de Excelencia Severo Ochoa'' (20205CEX001).}
	
\subjclass[2010]{Primary 47A15, 47A55, 47B15}
\keywords{Compact perturbations of normal operators, invariant subspaces, spectral subspaces, decomposable operators}

\date{September 2024, revised version January 2025}

\begin{abstract}
We prove that a large class of trace-class perturbations of diagonalizable normal operators  on a separable, infinite dimensional complex Hilbert space have non-trivial closed hyperinvariant subspaces. Moreover, a large subclass consists of decomposable operators in the sense of  Colojoar\u{a} and Foia\c{s} \cite{CF68}.
\end{abstract}

\maketitle

\section{Introduction}

A long-standing open problem is whether every rank-one perturbation of a diagonalizable normal operator on a separable infinite-dimensional complex Hilbert space has a non-trivial closed invariant subspace (see, for example, Problem 8K in the 1978 Pearcy's monograph \cite{Pearcy}). About fifteen years ago, remarkable results in this setting were proved by Foia\c{s}, Jung, Ko and Pearcy in \cite{FJKP07, FJKP08, FJKP11}, and subsequently by Fang and J. Xia for the case of finite-rank perturbations (see \cite{FX12}).  Recently, the authors in \cite{GG, GG2, GG3} have undertaken a thorough study based on characterising spectral subspaces which has allowed them to prove that a large class of finite-rank perturbations of diagonalizable normal operators are \emph{decomposable operators} in the sense of  Colojoar\u{a} and Foia\c{s} \cite{CF68}. As a consequence, every operator $T$ in such a class has a rich spectral structure and plenty of non-trivial closed hyperinvariant subspaces, which extends all the previously known results on such  question claimed as \emph{a stubbornly intractable problem} in \cite{FJKP07}.

\medskip

Nevertheless, the character of the finiteness rank of the compact operator plays a fundamental role in the aforementioned study. Indeed, studying the existence of invariant subspaces or the decomposability of perturbations of normal operators, even of hermitian operators, by \emph{general} compact operators is an old problem.
Livsi\u{c} solved the existence problem for nuclear perturbations of self-adjoint operators, Sahnovi\u{c} for Hilbert-Schmidt perturbations, and Gohberg and Krein, Macaev, and Schwartz for perturbations with compacts belonging to the Schatten von Neumann class $\mathcal{C}_p$, $1\leq p<\infty$ (see \cite{Dunford and Schwarz} for more on the subject).
In the setting of compact perturbations of normal operators $N$, if the spectrum of $N$ lies on a $C^2$ Jordan curve $\gamma$,  Radjabalipour and H. Radjavi \cite{Radjabalipour-Radjavi} showed that the linear bounded  operator $T=N+K$ where  $K$ is a compact operator belonging to $\mathcal{C}_p$, $1\leq p<\infty$, is  decomposable if and only if the spectrum $\sigma(T)$ does not fill the interior of $\gamma$  (see also \cite{Radjabalipour}).

\smallskip

But, in 1977 Herrero \cite{Herrero} proved that not every compact perturbation of a unitary operator is decomposable. So not every compact perturbation of a normal operator with a spectrum lying on a $C^2$ Jordan curve is decomposable, and the situation becomes even more hopeless if no assumption on the spectrum is required.

\medskip

In this framework, the aim of this work is studying trace-class perturbations of diagonalizable normal operators $(\mathcal{N}+\mathcal{C}_1)$ acting on a separable, infinite dimensional complex Hilbert space $H$ from the standpoint of view of the existence of invariant subspaces. Indeed, we will prove the that there exists a large subclass $(\mathcal{N}+\mathcal{C}_1)_{hs}$ having non-trivial closed \emph{hyperinvariant subspaces}, namely, each $T\in (\mathcal{N}+\mathcal{C}_1)_{hs}$ has non-trivial closed invariant subspaces which are also invariant for every operator in its commutant $\{T\}'$.  Moreover, for those $T\in (\mathcal{N}+\mathcal{C}_1)_{hs}$ such that the point spectrum of both $T$ and its adjoint $T^*$ is at most countable, we will show that they do have more structure regarding invariant subspaces, namely, they are decomposable operators. Recall that a bounded linear operator $T$ on $H$ is decomposable if splitting the spectrum $\sigma(T)$ provides a decomposition of $H$ in terms of
invariant subspaces. More precisely, if for every open cover $\{U,V\}$ of $\mathbb{C}$ there exist two closed invariant subspaces $H_1,H_2 \subseteq
H$ such that
\[
	\sigma(T\vert_{H_1}) \subseteq U \, \text{ and } \, \sigma(T\vert_{H_2}) \subseteq V,
\]
and $H= H_1+H_2$. Here $\sigma(T\vert_{H_i})$ denotes the spectrum of the restriction of $T$ to the invariant subspace $H_i$, $i=1,2$. It is worthy to point out that the sum decomposition is, in general, not direct, nor are the spectra of the restrictions necessarily disjoint. Decomposable operators were introduced by Foia\c{s} \cite{FOIAS} in the sixties as a generalization of spectral operators in the sense of Dunford \cite{Dunford and Schwarz} and  Foia\c{s}' original definition was somewhat more technical,
but equivalent to the one set down here (see \cite{LN00} for more on the subject).

\medskip

In order to state our main result, let $H$ be a separable, infinite dimensional complex Hilbert space and $\mathcal{E}= \{e_n\}_{n\geq 1}$  an (ordered) orthonormal basis of $H$ fixed. 

If $\Lambda = \{\lambda_n\}_{n\geq 1}$ is any bounded sequence in the complex plane $\C$, let $D_\Lambda$ denote  the diagonal operator with respect to $\mathcal{E}$ associated to $\Lambda$, namely,
$$D_\Lambda e_n = \lambda_n e_n,\qquad (n\geq 1).$$
For $k\geq 1$, let  $u_k$ and $v_k$  non zero vectors in $H$  and let us denote their Fourier coefficients with respect to $\mathcal{E}$ as
$$ u_k = \sumn \al_n^{(k)}e_n, \quad v_k = \sumn \beta_n^{(k)}e_n \qquad (k\geq 1).$$
Our main theorem in this work reads as follows:

\begin{theorem*}
Let $H$ be a separable, infinite dimensional complex Hilbert space, $\Lambda = (\lambda_n)_{n\geq 1}\subset \mathbb{C}$ a bounded sequence and $\{u_k\}_{k\geq 1}$, $\{v_k\}_{k\geq 1}$  non zero vectors in $H$. Assume
			 \begin{equation}\label{condicion}
			 \sum_{(n,k) \in \mathcal{N}_u} |\al_n^{(k)}|^2\log \left(1+\frac{1}{|\al_n^{(k)}|}\right) + \sum_{(n,k) \in \mathcal{N}_v}|\beta_n^{(k)}|^2\log\left(1+ \frac{1}{|\beta_n^{(k)}|}\right) < \infty,
			 \end{equation}
			 where $\mathcal{N}_u := \{(n,k) \in \N\times \N : \al_n^{(k)} \neq 0 \}$ and $\mathcal{N}_v := \{(n,k) \in \N\times \N : \beta_n^{(k)} \neq 0 \}$.
Then, the trace-class perturbation of $D_\Lambda$, $T = D_\Lambda + \sumk u_k\otimes v_k$, acting on $H$ by
\begin{equation}\label{forma}
T x= \left (D_\Lambda + \sumk u_k\otimes v_k \right ) x= D_\Lambda x +  \sumk \pe{x,v_k} u_k, \qquad (x\in H),
\end{equation}
has non trivial closed hyperinvariant subspaces provided that it is not a scalar multiple of the identity operator. Moreover, if both point spectrum $\sigma_p(T)$ and $\sigma_p(T^*)$ are at most countable, $T$ is decomposable.
\end{theorem*}

Before proceeding further, a couple of comments are in order. Firstly,   condition \eqref{condicion} implies that
\begin{equation}\label{consecuencia sumabilidad 1}
\sumn \sumk \left(|\al_n^{(k)}|^2 + |\beta_n^{(k)}|^2 \right) < \infty
\end{equation}
and
	\begin{equation}\label{consecuencia sumabilidad 2}
		\sum_{(n,k) \in \mathcal{N}_u} |\al_n^{(k)}|^2\log \left(\frac{1}{|\al_n^{(k)}|}\right) + \sum_{(n,k) \in \mathcal{N}_v}|\beta_n^{(k)}|^2\log\left( \frac{1}{|\beta_n^{(k)}|}\right) < \infty.
	\end{equation}
In particular, \eqref{consecuencia sumabilidad 1} yields that $\sumk (\norm{u_k}^2 + \norm{v_k}^2) < \infty,$ so the compact operator $K = \sumk u_k\otimes v_k $ is  trace-class. Secondly, trace-class perturbations of normal operators whose eigenvectors span $H$  are unitarily equivalent to those expressed by \eqref{forma}.

\medskip

\begin{remark}
It should be noted that the condition \eqref{condicion} is slightly different to the one imposed in \cite{GG2, GG3}.
This modification is necessary to ensure the validity of both \eqref{consecuencia sumabilidad 1} and \eqref{consecuencia sumabilidad 2}. An equivalent condition for obtaining these summability properties is to assume \eqref{consecuencia sumabilidad 2} and furthermore to suppose that there exist finite subsets $\mathfrak{N}_u, \mathfrak{N}_v \subset \N\times \N$ such that
$$ \sup\limits_{(n,k)\in (\N\times\N)\setminus \mathfrak{N}_u} |\al_n^{(k)}| < 1, \qquad  \sup\limits_{(n,k)\in (\N\times\N)\setminus \mathfrak{N}_v} |\beta_n^{(k)}| < 1.$$
\end{remark}

\medskip

As it was aforementioned, the main theorem of this work extends the results in \cite{FJKP07, FJKP08, FJKP11, FX12, GG2, GG3}. In particular, in the rank-one perturbation case, if $T = D_\Lambda + u_1\otimes v_1$ is non-scalar,  the summability condition \eqref{condicion} along with authors' contributions yields, in particular, that $T$ has non trivial closed hyperinvariant subspaces as far as
$$\sumn (|\al_n^{(1)}|^p + |\beta_n^{(1)}|^q )<\infty$$
for every $(p,q)\in (0,2]\times (0,2]\setminus \{(2, r), (r, 2):\; r\in(1,2]\}$ being, indeed, decomposable whenever the point spectrum of $T$ and $T^*$ are, at most, denumerable (see Figure \ref{fig:1}).
Moreover, in the infinite-rank perturbation case, it comprises the results by Klaja \cite{Klaja} and Theorem 1.2 of Albretch and Chevreau \cite{AA}.

\begin{figure}[htb]
\centering
  \includegraphics[width=.35\linewidth]{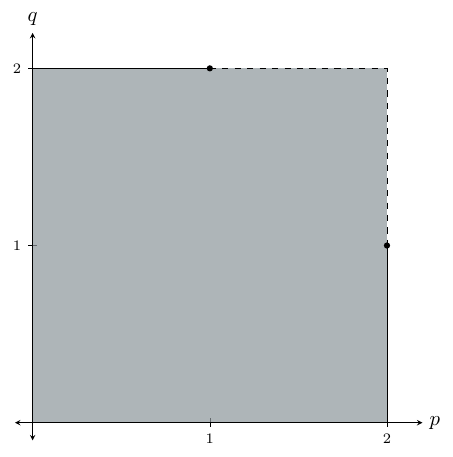}
  \caption{Decomposability in the rank-one perturbation case}
  \label{fig:1}
\vspace*{-0,2cm}
\end{figure}

Before closing this introductory section, we outline the strategy in order to prove our main theorem, pointing out the main differences with the previously aforementioned results.
As in the finite-rank case, the proof relies on the construction of a family of spectral idempotents associated to the operator $T=D_\Lambda+\sumk u_k\otimes v_k$ that is, idempotents $J$ belonging to the bicommutant of $T$ such that their ranges are spectral subspaces of $T$. These ranges will be the non-trivial closed hyperinvariant subspaces for $T$. Moreover, we will be able to construct a sufficiently rich Boolean algebra of such spectral idempotents, which will provide the decomposability as in  \cite{FJKP11}.

\smallskip

Accordingly, our main efforts are addressed to the construction of such operators and for such a task, we will make use of the so-called \textit{unconventional Dunford functional calculus}, but with substantial modifications.

\smallskip

The unconventional functional calculus was explicitly stated in \cite{FX12} and used in \cite{Klaja, AA} to provide non-trivial closed hyperinvariant subspaces and decomposability for certain compact perturbations of diagonalizable normal operators, as we already pointed out. The main idea consists of considering piecewise-differentiable closed curves $\gamma \subset \C$ intersecting the spectrum $\sigma(T)$ and defining a right-inverse $R(z)$ of $(T-zI)$ for every $z \in \gamma.$ This inverse turns out to be of the form
$$R(z) = (D_\Lambda-z I)^{-1} + K(z),$$
where $K(z)$ is a compact operator for each $z \in \gamma.$ In general, $(D_\Lambda-zI)^{-1}$ may be unbounded, so $R(z)$ would be unbounded as well. Nevertheless, the operator $R(z)$ can be integrated along the curve $\gamma$, due to the Borel functional calculus for normal operators (see \cite[Proposition 2.6]{Klaja}). Hence, the spectral idempotents (or other kind of operators with equivalent properties) are obtained as the operators defined via
$$\frac{1}{2\pi i} \int_\gamma R(z)dz.$$

\smallskip

Our approach will also rely on integrating expressions of the kind $(D_\Lambda-zI)^{-1} + K(z)$ throughout a curve (see \eqref{definicion idempotentes}), but under the assumptions of summability \eqref{condicion} the operator $K(z)$ may  not even be well defined for those $z \in \gamma\cap \sigma(T)$. Thus, we will consider a \textit{formal} right inverse of $(T-zI)$ and integrate it along a curve $\gamma$ which will allow us to obtain bounded operators $J$ (see Theorem \ref{idempotentes acotados}) and which will turn out to be spectral idempotents (Theorem \ref{teorema idempotentes}). The proof of this latter fact will be carried out through Section \ref{seccion 4}, for which a characterization of the spectral subspaces associated to $T$ will be needed (Theorem \ref{caracterizacion subespacios espectrales}).

\smallskip

Integrating formal right inverses of $(T-zI)$ already appeared implicitly in the authors' work \cite{GG3} (tracing back to \cite[Theorem 6.7]{GG}) in connection with the existence of spectral idempotents for finite rank perturbations of diagonalizable normal operators.  Nevertheless, in such a context, the definition of the spectral idempotents relies heavily on finite-dimensional linear algebra since it deals with the determinant and cofactors of a certain matrix $M_T$ associated to the operator (see \cite[Lemma 4.5 and Definition 4.8]{GG3}). Moreover, the expression of the determinant via the associated cofactors plays a decisive role in the construction.

\smallskip

For trace class perturbations of normal operators, the finite-dimensional linear algebra does not apply anymore and the tools coming from Fredholm theory of determinants seem not to be suitable to face the problem. Accordingly, the construction of the idempotents in this setting will depend on the invertibility of an auxiliary operator $I+Y(z)X(z)$, which plays a role similar to that of the matrix $M_T$ along the curve $\gamma$. In addition, the properties of continuity of the map $z\in \gamma \mapsto (I+Y(z)X(z))^{-1}$ will allow us to obtain bounds of conditionally-convergent series and an inversion formula (Theorem \ref{teorema formulas}) that will be fundamental to obtain the desired properties for the operators $J$. The construction of the operator $I+Y(z)X(z)$ and the study of its properties will be carried out in Section \ref{seccion 3}.

\section{Preliminaries}

In this section, we recall some preliminaries and prove a characterization of the spectral subspaces associated to closed sets of the complex plane for trace-class perturbations of diagonalizable normal along the lines of \cite[Theorem 2.1]{GG}.

Throughout this work, $H$  denotes an infinite dimensional separable complex  Hilbert space, $\EL(H)$ the Banach algebra of all bounded linear operators on $H$ and $\mathcal{E}= \{e_n\}_{n\geq 1}$ an (ordered) orthonormal basis of $H$ fixed.

Recall that compact operator $K \in \EL(H)$ belongs to the Schatten class $\mathcal{C}_p$ with $1\leq p < \infty$ if
$$\sumn a_n(K)^p<\infty,$$ where $a_n(K)$ denotes the singular values of $K$, namely, eigenvalues of the hermitian operator $|K|:=\sqrt{K^*K}$
Indeed, for every $1\leq p < \infty$, the norm
$$ \norm{K}_{\mathcal{C}_p} := \left( \sumn a_n(K)^p\right)^{1/p} < \infty$$
makes each Schatten class $\mathcal{C}_p$ a closed ideal of operators of $\EL(H)$.

For $p=1$, $\norm{ \; }_{\mathcal{C}_1}$ is the trace-class norm  and $K$ is trace-class if there exist two orthogonal sequences $\{x_i\}$ and $\{y_i\}$ in $H$  and positive real numbers $\left(\lambda _{i}\right)$ in $\ell ^{1}$ such that
$$
 x\mapsto T(x)=\sum _{i=1}^{\infty }\lambda _{i}\left\langle x,x_{i}\right\rangle y_{i},\quad \forall x\in H,
$$
where $(\lambda _{i})$ are the singular values of $K$, with each value repeated as often as its multiplicity.

\smallskip


As we mentioned in the introduction,  the proof of the Main Theorem will be carried out by constructing a sufficiently rich family of spectral idempotents of the operator $T$, that is, idempotents  with \emph{spectral subspaces} ranges. Spectral subspaces play an important role in order to produce non-trivial closed hyperinvariant subspaces and to introduce them, let us recall a few relevant notions from Local Spectral Theory (we refer to the monograph \cite{LN00} for more on the subject).

\smallskip

A linear bounded operator $T$ in $H$ has the \emph{single-valued extension property} (SVEP) if for every connected open set $G\subset \mathbb{C}$ and every analytic function $f:G\to X$  such that $(T-\lambda I) f(\lambda)\equiv 0$ on $G$, one has $f\equiv 0$ on $G$. Note that every operator $T$ such that the point spectrum $\sigma_p(T)$ has empty interior has the SVEP.

\smallskip

When $T$ has the SVEP and $x\in H$, is possible to define the \textit{local resolvent} $\rho_T(x)$ of $T$ at $x$ as the union of all open sets $U \subset \C$ such that there exists a unique vector-valued holomorphic function $f_x: U \rightarrow H$ satisfying
\begin{equation}\label{ecuacion resolvente local-}
(T-zI)f_x(z) = x \quad (z\in U).
\end{equation}
The analytic extension of $f_x$ to $\rho_T(x)$ is called \textit{local resolvent function} of $T$ at $x$.
Likewise, the complement of $\rho_T(x)$ is called the \textit{local spectrum} of $T$ at $x$:
$$\sigma_T(x) = \C \setminus \rho_T(x).$$
Among many other properties, the local spectrum always satisfies that $\sigma_T(x) \subseteq \sigma(T)$ for every $x \in X$ and $\sigma_T(0) = \emptyset$.

\smallskip

The concept of local spectrum allows to define \emph{the local spectral subspaces} of an operator. Given a subset $\Omega\subset \mathbb{C}$, the local spectral subspace of $T$ associated to $\Omega$ is
$$H_T (\Omega)= \{x \in H :\; \sigma_T(x) \subseteq  \Omega\}.$$
If $\Omega_1\subset \Omega_2$ then $H_T (\Omega_1)\subset H_T (\Omega_2)$ and $H_T(\Omega) = H_T(\Omega\cap \sigma(T))$.
It is worthy to note that $H_T(\Omega)$ is a linear manifold that is hyperinvariant for $T$ but not necessarily closed even for closed subsets (see \cite[Chapter 2]{AIENA-book} for instance).

\smallskip

We are in position to define the spectral idempotents of an operator, introduced in \cite{FJKP11} in connection with the study of decomposability.

\smallskip

\begin{definition}
Given $T \in \EL(H)$, an idempotent operator $J \in \EL(H)$ is said to be a \textbf{spectral idempotent} for $T$ if:
    \begin{enumerate}
    	\item $J$ belongs to the bicommutant of $T$, that is $J \in \biconm{T}.$
    	\item The range of $J$ is the spectral subspace associated to $\sigma(T_{\mid \ran(J)})$, that is,
    $$\ran(J) = H_T(\sigma(T_{\mid \ran(J)})).$$
    \end{enumerate}
    In such a case, $J$ is denoted by $J_T(\sigma),$ where $\sigma = \sigma(T_{\mid \ran(J)})$.
		\end{definition}

\smallskip

With this definition at hands, we proceed to prove a characterization of certain spectral subspaces associated to $T$ which will be useful to define particular spectral idempotents. In order to state it, we introduce the following notation. Given $\Lambda=(\lambda_n)\subset \mathbb{C}$ any sequence and $A \subset \C$, $N_A$ will stand for the set of positive integers:
$$N_A = \{n \in \N : \lambda_n \in \Lambda\cap A\}.$$
In addition, given an open set $U$, a holomorphic map $g$ on $U$ and $w \in U$, we define
$$\Gamma(g)(z,w) = \left\{ \begin{matrix} & \frac{g(z)-g(w)}{z-w} \quad & z\neq w \\ & g'(w) \quad &z=w
\end{matrix}\right. $$
Observe that $\Gamma(g)(z,w)$ is continuous in $U\times U$ and for every $w \in U$, the map $z\mapsto \Gamma(g)(z,w)$ is holomorphic in $U$.

\smallskip	

\begin{theorem}\label{caracterizacion subespacios espectrales}
Let $\Lambda=(\lambda_n)_{n\geq 1}\subset \mathbb{C}$ be a bounded sequence and  $u_k = \sumn \al_n^{(k)}e_n$, $v_k = \sumn \beta_n^{(k)}e_n$, $k\geq 1$,  non-zero vectors in $H$
such that $\sumk (\norm{u_k}^2+ \norm{v_k}^2) < \infty.$
Suppose that $T=D_\Lambda + \sumk u_k\otimes v_k  \in\EL(H)$ is a non-scalar operator having the SVEP and $F\subset \C$ a closed set such that $\sigma(T)\cap F \neq \emptyset.$ A vector $x\in H$ belongs to the spectral subspace $H_T(F)$ if and only if there exist a sequence $\{g_k\}_{k\geq 1}$ of unique analytic functions defined on $F^c=\mathbb{C}\setminus F$ satisfying:
		\begin{enumerate}
			\item [(i)] If $x = \sumn x_ne_n$, then $$ x_n = \sumk g_k(\lambda_n)\al_n^{(k)}$$ for every $n \in N_{F^c}.$
			\item [(ii)] The function $$ z\in F^c \mapsto \sum_{n \in N_{F^c}} \left( \sumk \Gamma(g_k)(z,\lambda_n)\al_n^{(k)} \right) e_n$$ is a vector valued analytic function on $F^c$.
			\item [(iii)] For each $k \in \N$, the identity
\begin{equation*}
				\begin{split}
					\sum_{n \in N_F} \frac{x_n \overline{\beta_n^{(k)}}}{\lambda_n-z} & = g_k(z) \left( \sum_{n \in N_F}\frac{\al_n^{(k)}\overline{\beta_n^{(k)}}}{\lambda_n-z}+1\right) - \sum_{n \in N_{F^c}} \left( \summ \Gamma(g_m)(z,\lambda_n)\al_n^{(m)} \right) \overline{\beta_n^{(k)}} \\ & + \sum_{n \in N_F} \left(\sum_{m \neq k} \frac{g_m(z)\al_n^{(m)}\overline{\beta_n^{(k)}}}{\lambda_n-z} \right)
				\end{split}
			\end{equation*}
holds for every $z \in F^c.$
		\item [(iv)] The series $$ \sumn \left| \sumk g_k(z)\al_n^{(k)}\right|^2 $$ is finite for every $z \in F^c.$
		\end{enumerate}
		In such a case, the unique local resolvent function of $T$ at $x$ is given by $$f_x(z) = \sum_{n\in N_F} \frac{x_n-\sumk g_k(z)\al_n^{(k)}}{\lambda_n-z} e_n + \sum_{n\in N_{F^c}} \sumk \Gamma(g_k)(z,\lambda_n)\al_n^{(k)}e_n, \qquad (z \in F^c).$$
	\end{theorem}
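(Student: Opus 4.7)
The plan is to analyze the local resolvent equation $(T-zI)f_x(z) = x$ coordinatewise on $\mathcal{E}$, splitting the action as $Tf_x(z) = D_\Lambda f_x(z) + \sumk g_k(z) u_k$, where I set $g_k(z) := \pe{f_x(z), v_k}$. Pairing with $e_n$ produces the scalar identity
\begin{equation*}
(\lambda_n-z)\, \pe{f_x(z),e_n} + \sumk g_k(z)\, \al_n^{(k)} \;=\; x_n, \qquad (n\geq 1,\; z\in F^c).
\end{equation*}
For $n \in N_F$ the denominator never vanishes on $F^c$, so dividing recovers the first piece of the displayed expression for $f_x$. For $n \in N_{F^c}$, $\lambda_n$ itself lies in $F^c$, and the quotient $(x_n - \sumk g_k(z)\al_n^{(k)})/(\lambda_n - z)$ must be holomorphic across $z = \lambda_n$; the removability of this apparent pole is precisely condition (i), after which the coordinate simplifies to $\sumk \Gamma(g_k)(z,\lambda_n)\al_n^{(k)}$, giving the second piece of the formula.

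For the ``only if'' direction, SVEP produces a unique holomorphic local resolvent $f_x : F^c \to H$, so $g_k(z) := \pe{f_x(z),v_k}$ is a uniquely determined holomorphic function on $F^c$, and the explicit formula for $f_x$ follows from the coordinatewise analysis. Condition (ii) is then automatic: the first piece of $f_x$ is clearly holomorphic on $F^c$ because $|\lambda_n-z|\geq \dist(z,F) > 0$ for all $n \in N_F$, so the second piece inherits holomorphy from $f_x$. Condition (iv) is the statement that $\sumk g_k(z)\,u_k = K f_x(z) \in H$ for $K = \sumk u_k\otimes v_k$, so its $n$-th Fourier coefficient $\sumk g_k(z)\al_n^{(k)}$ is square-summable. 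Condition (iii) is obtained by computing $\pe{f_x(z), v_k} = g_k(z)$ directly from the explicit expression for $f_x$, splitting the sum over $N_F$ and $N_{F^c}$ and isolating the $m=k$ term so that it regroups with $g_k(z)$; uniqueness of the $g_k$'s is inherited from that of $f_x$.

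For the ``if'' direction, define $f_x : F^c \to H$ by the displayed formula from the data $\{g_k\}$. Conditions (iv) and (ii), together with the uniform bound $|\lambda_n-z|^{-1} \leq \dist(z,F)^{-1}$ on $N_F$ and the summability $\sumk (\norm{u_k}^2 + \norm{v_k}^2) < \infty$, guarantee that $f_x(z) \in H$ and depends holomorphically on $z \in F^c$. A direct computation using (iii) then yields $\pe{f_x(z),v_k} = g_k(z)$, and coordinatewise verification gives $(T-zI)f_x(z)=x$: for $n \in N_F$ the identity is immediate, while for $n \in N_{F^c}$ the elementary relation $(\lambda_n-z)\,\Gamma(g_k)(z,\lambda_n) + g_k(z) = g_k(\lambda_n)$ combined with (i) collapses the sum to $\sumk g_k(\lambda_n)\al_n^{(k)} = x_n$. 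Hence $F^c \subseteq \rho_T(x)$ and $x \in H_T(F)$.

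The main obstacle, and the step requiring the most care, is the rigorous justification of the interchanges of summation and of the conditional convergence of the various double series appearing in (iii) and in the computation $\pe{f_x(z),v_k} = g_k(z)$, especially where $z$ approaches the closure of $\Lambda \cap F$ or coincides with some $\lambda_n \in \Lambda \cap F^c$. These will be handled by combining Fubini and Cauchy--Schwarz with the $\ell^2$-bound provided by (iv), the summability $\sumk \norm{v_k}^2 < \infty$, and the uniform estimate $|\lambda_n - z|^{-1} \leq \dist(z,F)^{-1}$ on $N_F$, along the lines of the characterization obtained for finite-rank perturbations in \cite[Theorem 2.1]{GG}.
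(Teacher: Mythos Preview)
Your proposal is correct and follows essentially the same approach as the paper's proof: both directions proceed by setting $g_k(z):=\pe{f_x(z),v_k}$, deriving the coordinate identity $(\lambda_n-z)\pe{f_x(z),e_n}+\sumk g_k(z)\al_n^{(k)}=x_n$, and reading off conditions $(i)$--$(iv)$ and the explicit formula for $f_x$ from it; for sufficiency both define $f_x$ by the displayed formula, use $(iii)$ to recover $\pe{f_x(z),v_k}=g_k(z)$, and then check the resolvent equation. The one minor difference is that in the sufficiency direction you verify $(T-zI)f_x(z)=x$ coordinatewise for \emph{all} $z\in F^c$ via the algebraic identity $(\lambda_n-z)\Gamma(g_k)(z,\lambda_n)+g_k(z)=g_k(\lambda_n)$, whereas the paper first checks it on $F^c\setminus\{\lambda_n:n\in N_{F^c}\}$ (where the quotient form of $\Gamma$ is available) and then invokes the Identity Theorem to extend across the countable exceptional set; your route is slightly cleaner but not substantively different. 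One small remark: the hypothesis $\sumk(\norm{u_k}^2+\norm{v_k}^2)<\infty$ is actually needed only to establish the \emph{necessity} of $(iv)$ (as the paper notes in the remark following the proof), not in the sufficiency direction where $(ii)$ and $(iv)$ alone suffice to make $f_x$ a well-defined $H$-valued analytic map.
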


\smallskip

The proof of Theorem \ref{caracterizacion subespacios espectrales} is based on that of Theorem 2.1 in \cite{GG} with suitable modifications. We detail it for the sake of completeness.

\smallskip

\begin{proof} Let us start by proving that the conditions $(i)-(iv)$ are necessary. Let $x \in H_T(F)$ be fixed. Since $T$ has the SVEP, there exists an unique vector-valued analytic function $f_x : F^c\rightarrow H$ such that
\begin{equation}\label{ecuacion resolvente local}
		(T-zI)f_x(z) = x
	\end{equation}
for every $z\in F^c.$ Now,
$$(T-zI)f_x(z) = (D_\Lambda-zI)f_x(z) + \sumk \pe{f_x(z),v_k}u_k.$$
We define
$$g_k(z) := \pe{f_x(z),v_k}, \qquad (z\in F^c)$$
for each $k\in \N.$ Obviously, $g_k$ is an analytic function in $F^c.$ Moreover, since $f_x$ unique, it follows that the functions $g_k$ are unique for each $k\in \N.$ Now, write $$f_{x,n}(z) := \pe{f_x(z),e_n}.$$
From \eqref{ecuacion resolvente local} it follows
$$(\lambda_n-z)f_{x,n}(z) + \sumk g_k(z)\al_n^{(k)}=x_n$$
for every $n\in \N$ and $z \in F^c.$ Then, for every $n\in N_{F^c},$
$$\sumk g_k(\lambda_n)\al_n^{(k)} = x_n,$$ what proves $(i)$.

To show $(ii)$, equation \eqref{ecuacion resolvente local} along with $(i)$ yields that
$$f_{x,n}(z) = \sumk \Gamma(g_k)(z,\lambda_n)\al_n^{(k)}$$
for every $n\in N_{F^c}$ and $z \in F^c.$ On the other hand, for every $n \in N_F$
$$f_{x,n}(z) = \frac{x_n-\sumk g_k(z)\al_n^{(k)}}{\lambda_n-z}$$
for every $z\in F^c.$ Accordingly,
\begin{equation}\label{expresion funcion resolvente}
		f_x(z) = \sum_{n\in N_F} \frac{x_n-\sumk g_k(z)\al_n^{(k)}}{\lambda_n-z} e_n + \sum_{n\in N_{F^c}} \sumk \Gamma(g_k)(z,\lambda_n)\al_n^{(k)}e_n.
	\end{equation}

\smallskip

Observe that the map
	$$z\in F^c \mapsto \sum_{n \in N_{F^c}} \left( \sumk \Gamma(g_k)(z,\lambda_n)\al_n^{(k)} \right) e_n$$ is just the orthogonal projection of $f_x$ onto $\overline{\Span \{e_n : n \in N_{F^c}\}},$ so the analyticity of $f_x$ yields that condition $(ii)$ holds.

\smallskip	

To prove $(iii)$, it suffices to recall that $g_k(z) = \pe{f_x(z),v_k}$ for each $k \in \N$ and use the expression \eqref{expresion funcion resolvente} to deduce the equations.
	
\smallskip

Finally, in order to show $(iv)$, let $z \in F^c.$ By recalling that $g_k(z) = \pe{f_x(z),v_k}$ and upon applying twice the Cauchy-Scwartz inequality we obtain
\begin{equation*}
\begin{split}
\sumn \left| \sumk g_k(z)\al_n^{(k)}\right|^2  &
 \leq \sumn  \left( \sumk |g_k(z)|^2 \right) \left( \sumk |\al_n^{(k)}|^2\right)
 \\&= \left(\sumn \sumk |\al_n^{(k)}|^2\right) \left( \sumk |\pe{f_x(z),v_k}|^2 \right) \\ & \leq \left(\sumk \norm{u_k}^2\right) \left( \norm{f_x(z)}^2 \sumk \norm{v_k}^2\right)
  \\& < \infty,
\end{split}
\end{equation*}
which shows $(iv).$
	
\smallskip

In order to prove the sufficiency, let us assume that there exists a sequence $(g_k)_{k\geq 1}$ of analytic functions on $F^c$ satisfying $(i),(ii)$, $(iii)$ and $(iv)$ and  show that $x\in H_T(F)$. Let us define
$$f_x(z) = \sum_{n\in N_F} \frac{x_n-\sumk g_k(z)\al_n^{(k)}}{\lambda_n-z} e_n + \sum_{n\in N_{F^c}} \sumk \Gamma(g_k)(z,\lambda_n)\al_n^{(k)}e_n \qquad (z\in F^c).$$
Observe that, by $(ii)$ and $(iv)$, $f_x$ is analytic on $F^c.$ Moreover, by $(iii)$, it follows that $\pe{f_x(z),v_k}=g_k(z)$ for every $z \in F^c.$

\smallskip

Let us prove that $$(T-zI)f_x(z) = x$$ for every $z \in F^c.$ First, we observe that for every $z \in F^c\setminus \{\lambda_n : n \in F^c\}$
\begin{eqnarray}\label{ecuacion 2}
			(T-zI)f_x(z) & = & (D_\Lambda-z)f_x(z) + \sumk g_k(z)u_k \nonumber
			\\ & = & \sum_{n\in N_F} (x_n - \sumk g_k(z)\al_n^{(k)})e_n + \sum_{n\in N_{F^c}} \sumk (g_k(\lambda_n)-g_k(z))\al_n^{(k)}e_n +   \sumk g_k(z)u_k \nonumber
			\\ & = & x - \sumn \sumk g_k(z)\al_n^{(k)}e_n + \sumk g_k(z)u_k.
\end{eqnarray}
Now, condition $(iv)$ implies that
$$G(z):=\sumn (\sumk g_k(z)\al_n^{(k)}) e_n$$
converges in norm for every $z \in F^c\setminus \{\lambda_n : n \in F^c\}$. In addition, $\sumk g_k(z)u_k$ converges for every $z\in F^c\setminus \{\lambda_n : n \in F^c\}$ due to the boundedness of the operator $\sumk u_k\otimes v_k$ and the equality $\pe{f_x(z), v_k}=g_k(z)$. The equality $\pe{G(z),e_n}=\pe{\sumk g_k(z)u_k, e_n}$ for every $z\in F^c\setminus \{\lambda_n : n \in F^c\}$ and $n\in \N$ shows that
$$G(z)= \sumk g_k(z)u_k$$ for every $z\in F^c\setminus \{\lambda_n : n \in F^c\}$. This along with \eqref{ecuacion 2}, yields that $(T-zI)f_x(z) = x$ for every $z\in F^c\setminus\{ \lambda_n : n \in N_{F^c}\}$.
Since  $\{ \lambda_n : n \in N_{F^c}\}$ is countable, the Identity Theorem yields that $(T-zI)f_x(z) = x$ for every $z \in F^c,$ as we wish to show.
\end{proof}

\smallskip

\begin{remark}
It is important to note that the hypothesis $\sumk (\norm{u_k}^2+\norm{v_k}^2) < \infty$ in Theorem \ref{caracterizacion subespacios espectrales} is only required to demonstrate the necessity of the condition $(iv)$. Clearly, this summability hypothesis implies, in particular, that the operator $K=\sumk u_k\otimes v_k$ is trace class. Indeed, in the absence of this hypothesis, conditions $(i)-(iv)$ suffice to establish that $x\in H_T(F)$, independent of any further convergence assumptions on the norms of $u_k$ and $v_k$, apart from the boundedness of the operator $K$.
\end{remark}
We close this section with a generalization of \cite[Lemma 2.2]{GG2} in the setting of doubly indexed series:

\begin{lemma}\label{lema condicion logaritmo}
		Let $ \Lambda = (\lambda_n)_{n\geq 1}$ be a bounded sequence of complex numbers and let $(\al_n^{(k)})_{n\geq 1,k\geq 1}$ be a doubly indexed sequence of complex numbers satisfying
		\begin{equation}\label{condicion lema}
\sum_{(n,k)\in \mathcal{N}_u} |\al_n^{(k)}|^2\log \left(1+ \frac{1}{|\al_n^{(k)}|}\right) < \infty
		\end{equation}
where $\mathcal{N}_u = \{(n,k) \in \N\times \N : \al_n^{(k)} \neq 0 \}$. Then, for almost every $\x\in \R,$
		$$ \sumn \sumk \frac{|\al_n^{(k)}|^2}{|\PR(\lambda_n)-\x|}<\infty.$$
\end{lemma}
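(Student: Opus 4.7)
The plan is to extend the argument of \cite[Lemma 2.2]{GG2} to the doubly-indexed setting, combining a Tonelli-type integration with a Borel--Cantelli reduction. Since $\Lambda$ is bounded, fix $M>0$ with $|\PR(\lambda_n)|\leq M$ for all $n$. For $|\x|\geq 2M$ one has $|\PR(\lambda_n)-\x|\geq |\x|/2$, giving the pointwise bound
\[
\sum_{n,k}\frac{|\al_n^{(k)}|^2}{|\PR(\lambda_n)-\x|}\leq\frac{2}{|\x|}\sum_{(n,k)\in\mathcal{N}_u}|\al_n^{(k)}|^2,
\]
which is finite because \eqref{condicion lema} easily implies $\sum_{(n,k)\in\mathcal{N}_u}|\al_n^{(k)}|^2<\infty$ (when $|\al|\leq 1$ one has $|\al|^2\leq|\al|^2\log(1+1/|\al|)/\log 2$, and only finitely many pairs satisfy $|\al_n^{(k)}|\geq 1$, since for such values $|\al|^2\log(1+1/|\al|)\geq |\al|/2$). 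It therefore suffices to prove the desired almost-everywhere finiteness on the compact interval $[-2M,2M]$.

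The central trick is to split each summand according to whether $\x$ lies very close to $\PR(\lambda_n)$. Define
\[
E_{n,k}:=\bigl\{\x\in[-2M,2M]:|\PR(\lambda_n)-\x|<|\al_n^{(k)}|^2\bigr\},
\]
whose Lebesgue measure is at most $2|\al_n^{(k)}|^2$. Since $\sum_{(n,k)\in\mathcal{N}_u}|\al_n^{(k)}|^2<\infty$, the first Borel--Cantelli lemma yields that for a.e.\ $\x\in[-2M,2M]$ the point $\x$ belongs to $E_{n,k}$ for only finitely many $(n,k)$.

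On the complementary set $E_{n,k}^c$ we have $|\PR(\lambda_n)-\x|\geq|\al_n^{(k)}|^2$, so a direct computation gives
\[
\int_{-2M}^{2M}\frac{|\al_n^{(k)}|^2}{|\PR(\lambda_n)-\x|}\,\mathbf{1}_{E_{n,k}^c}(\x)\,d\x\leq 2|\al_n^{(k)}|^2\log\!\bigl(3M/|\al_n^{(k)}|^2\bigr).
\]
After discarding the finitely many $(n,k)$ with $|\al_n^{(k)}|\geq 1/2$, the right-hand side is dominated by $C_M\,|\al_n^{(k)}|^2\log(1+1/|\al_n^{(k)}|)$, since $\log(3M/|\al|^2)=\log(3M)+2\log(1/|\al|)$ is comparable to $\log(1+1/|\al|)$ for small $|\al|$. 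Tonelli's theorem together with \eqref{condicion lema} then produce
\[
\int_{-2M}^{2M}\sum_{(n,k)\in\mathcal{N}_u}\frac{|\al_n^{(k)}|^2}{|\PR(\lambda_n)-\x|}\,\mathbf{1}_{E_{n,k}^c}(\x)\,d\x<\infty,
\]
so the ``far'' series is finite for a.e.\ $\x\in[-2M,2M]$.

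Combining the two steps, for a.e.\ $\x\in[-2M,2M]$ (also excluding the null set $\{\PR(\lambda_n):n\geq 1\}$), the ``far'' sum is finite, while the ``close'' sum has only finitely many nonzero terms, each finite because $\x\neq \PR(\lambda_n)$. This yields $\sum_{n,k}|\al_n^{(k)}|^2/|\PR(\lambda_n)-\x|<\infty$, completing the proof. The main delicate point is the choice of threshold $|\al_n^{(k)}|^2$ in defining $E_{n,k}$: that exponent is exactly what makes $\sum|E_{n,k}|$ summable via $\sum|\al_n^{(k)}|^2<\infty$, whereas the more naive threshold $|\al_n^{(k)}|$ would fail, since \eqref{condicion lema} does not furnish $\sum|\al_n^{(k)}|<\infty$.
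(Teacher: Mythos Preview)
Your argument is correct and follows essentially the same route as the one the paper indicates (the paper omits the proof, referring to \cite[Lemma 3.1]{Klaja}, \cite[Lemma 2.1]{FX12} and \cite[Lemma 2.2]{GG2}, whose common scheme is precisely the Borel--Cantelli reduction with threshold $|\al_n^{(k)}|^2$ together with Tonelli on the complementary ``far'' part). Your preliminary observation that \eqref{condicion lema} forces $\sum_{(n,k)\in\mathcal{N}_u}|\al_n^{(k)}|^2<\infty$ is exactly the point the paper singles out as the only extra ingredient needed to adapt the single-index proof.
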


The proof  is similar to that  of \cite[Lemma 3.1]{Klaja} in which  \cite[Lemma 2.1]{FX12} is generalized. The same ideas work to extend \cite[Lemma 2.2]{GG2} to this setting, taking into account that \eqref{condicion lema} implies that both series $\displaystyle \sumn \sumk |\al_n^{(k)}|^2$ and $\displaystyle \sum_{(n,k)\in \mathcal{N}_u} |\al_n^{(k)}|^2\log\left( \frac{1}{|\al_n^{(k)}|}\right)$ are finite, so we omit the proof for the sake of brevity.


\section{Decomposability set and auxiliary operators}\label{seccion 3}
	
In this section, we prove those key results which will allow us to define the spectral idempotents associated to the trace-class perturbations of diagonalizable normal operators covered by the Main Theorem. We start introducing the \textit{decomposability set} of such perturbations.

\smallskip

\begin{definition}
Let $\Lambda = (\lambda_n)_{n\geq 1}\subset \mathbb{C}$ be a bounded sequence not lying in any vertical line and such that the set of accumulation points $\Lambda'$ is not a singleton and denote by
$$a= \min\limits_{z \in \Lambda'} \PR(z) \qquad b=\max\limits_{z \in \Lambda'} \PR(z).$$
Let $\{u_k\}_{k\geq 1}$ and $\{v_k\}_{k\geq 1}$  be non zero vectors in $H$ satisfying \eqref{condicion} and assume that the trace-class perturbation  $T = D_\Lambda + \sumk u_k\otimes v_k$, acting on $H$ satisfies that $\sigma_p(T)\cup\sigma_p(T^*)$ is at most countable. The decomposability set of $T$ consists of those real numbers satisfying:

$$\Delta(T)= \left\{ \x \in (a,b)\setminus \PR(\sigma_p(T)\cup\sigma_p(T^*)) : \sumn \sumk \left(\frac{|\al_n^{(k)}|^2}{|\PR(\lambda_n)-\x|} + \frac{|\beta_n^{(k)}|^2}{|\PR(\lambda_n)-\x|}\right) < \infty\right\}.$$
\end{definition}

\medskip

\begin{remark}
Note that the assumption that the sequence $\Lambda$ does not lie in a vertical line is not a restriction in the class of operators considered, since it can be achieved by a rotation. It is included in order that $a<b$.
\end{remark}

\medskip

Before going further, observe that $\Delta(T)$ is non-empty, and even more, it contains almost every point of $(a,b)$ as a consequence of Lemma \ref{lema condicion logaritmo}.
Likewise, it is clear that the decomposability set of $T$ does not contain the real part of any eigenvalue of $D_\Lambda$, $T$ or $T^*.$

\medskip

To avoid some technicalities, we will make the following assumption in the rest of the manuscript, using the notation introduced in the Main Theorem.

\vspace*{0,3cm}
	
\noindent\framebox{
\vspace*{0,3cm}
\textbf{$(\star)$   Assumption:} \emph{We suppose that both $\sigma(D_\Lambda)=\overline{\Lambda}$ and
		$\sigma(T)$ are contained in the unit disc $\D$.}	
\vspace*{0,3cm}}

\vspace*{0,3cm}
	
\noindent Note that multiplying by some appropriate complex number the assumption is achievable and harmless regarding the existence of non-trivial closed  hyperinvariant subspaces. Likewise, it does not affect to the existence of non trivial spectral idempotents in the bicommutant of $T$.

\smallskip

\smallskip

Now, as in \cite{GG3}, given $\x$ in the real decomposability set  we consider the positively oriented curves
$$\gamma_\x^+:=\ell_\x \cup A_\x^+, \quad \gamma_\x^-:=\ell_\x\cup A_\x^-,$$
where
$$\ell_\x := \{ z \in \overline{\D}: \PR(z) = \x\},$$
and
$$A_\x^+ := \{e^{i\theta} \in \T: \PR(e^{i\theta}) \geq \x\},\quad A_\x^- := \{e^{i\theta} \in \T: \PR(e^{i\theta}) \leq \x\}.$$
Here $\T$ denotes the unit circle. Observe that, by definition, $\gamma_\x^+$ and $\gamma_\x^-$ are piecewise differentiable curves that do not intersect the sequence
$\Lambda$ and the point spectra $\sigma_p(T)$ and $\sigma_p(T^*)$.
	
\noindent Let us denote by
	$$F_\x^+ := \overline{\inte(\gamma_\x^+) }, \quad F_\x^- := \overline{\inte(\gamma_\x^-) },$$
	where $\inte(\gamma_\x^{+})$ and $\inte(\gamma_\x^{-})$ stand for the set of points in $\mathbb{C}$ with index 1 with respect to the closed Jordan curve $\gamma_\x^{+}$ or $\gamma_\x^-,$ respectively.
	
\medskip

In order to define particular square roots of bounded diagonal operators, we fix the square root in $\C\setminus\{0\}$:
$$z \in \C\setminus \{0\} \mapsto z^{1/2} := |z|^{1/2}e^{i\textnormal{arg}(z)/2}$$
where $\arg(z) \in [-\pi,\pi)$. It is important to remark that this is not a continuous branch of the square root, as there are no branches of the square root that are continuous in such a domain. Nevertheless, the chosen square root is \textit{well defined} as a function in $\C\setminus \{0\}$.

Having in mind that $D_\Lambda = \sumn \lambda_n e_n\otimes e_n \in \EL(H)$, we denote
\begin{eqnarray}\label{raiz positiva}
(D_\Lambda-zI)^{1/2} := \sumn (\lambda_n-z)^{1/2}e_n \otimes e_n \qquad  \text{for } z\in \C\setminus \Lambda, \\
(D_\Lambda-zI)^{-1/2}:= \sumn \frac{1}{(\lambda_n-z)^{1/2}}e_n \otimes e_n  \quad  \text{for } z\in \C\setminus \Lambda.
\end{eqnarray}
Observe that $(D_\Lambda-zI)^{1/2}$ is a well defined bounded operator for every $z \in \C\setminus \Lambda$, while $(D_\Lambda-zI)^{-1/2}$ is bounded if $z \notin \overline{\Lambda}$, and it is well-defined but unbounded if $z \in \overline{\Lambda}\setminus \Lambda.$

\smallskip

We are in position to introduce the infinite-dimensional counterpart of those operators considered by Fang and Xia in the proof of \cite[Lemma 3.1]{FX12}.

\begin{proposition} \label{proposicion operadores X,Y}
Let $\Lambda = (\lambda_n)_{n\geq 1}\subset \mathbb{C}$ be a bounded sequence not lying in any vertical line such that $\Lambda'$ is not a singleton and $\{u_k\}_{k\geq 1}$ and $\{v_k\}_{k\geq 1}$  non zero vectors in $H$ satisfying \eqref{condicion}. Assume that the trace-class perturbation  $T = D_\Lambda + \sumk u_k\otimes v_k$ satisfies both $\sigma(D_\Lambda)=\overline{\Lambda}$ and $\sigma(T)$ are contained in $\D$. For each $\x \in \Delta(T)$ the operators
\begin{eqnarray} \label{definicion X,Y}
X^+(z) = \sumk (D_\Lambda-zI)^{-1/2}u_k\otimes  e_k, \qquad \text{for } z\in \gamma_\x^+; \nonumber\\
\qquad \\
Y^+(z) = \sumk e_k\otimes  (D^*_\Lambda-\overline{z}I)^{-1/2}v_k, \qquad \text{for } z \in \gamma_\x^+, \nonumber
\end{eqnarray}
are well-defined bounded operators in $H$. The analogous statement holds for $\gamma_\x^-$.
\end{proposition}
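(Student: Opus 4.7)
The plan is to show that both $X^+(z)$ and $Y^+(z)$ are Hilbert--Schmidt operators on $H$ for every $z\in\gamma_\x^+$, which immediately yields boundedness. Since the two families are built symmetrically from the coefficients $\al_n^{(k)}$ and $\beta_n^{(k)}$, and the defining condition of $\Delta(T)$ places these on an equal footing, it suffices to carry out the argument for $X^+(z)$; the assertion for $Y^+(z)$ is obtained by the substitution $\al\rightsquigarrow\beta$, $u_k\rightsquigarrow v_k$.

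Since $X^+(z)e_j=(D_\Lambda-zI)^{-1/2}u_j$, a formal computation of the Hilbert--Schmidt norm yields
$$\norm{X^+(z)}_{HS}^2=\sumk\norm{(D_\Lambda-zI)^{-1/2}u_k}^2=\sumn\sumk\frac{|\al_n^{(k)}|^2}{|\lambda_n-z|}.$$
Once this double sum is known to be finite, the truncations $X_N^+(z):=\sum_{k=1}^{N}(D_\Lambda-zI)^{-1/2}u_k\otimes e_k$ form a Cauchy sequence in the Hilbert--Schmidt ideal of $\EL(H)$ and hence converge to a bounded operator, proving the statement. I will bound the double sum for the two pieces of $\gamma_\x^+=\ell_\x\cup A_\x^+$ separately. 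For $z=\x+iy\in\ell_\x$, projecting onto the real axis gives $|\lambda_n-z|\geq|\PR(\lambda_n)-\x|$, so by the very definition of the decomposability set,
$$\sumn\sumk\frac{|\al_n^{(k)}|^2}{|\lambda_n-z|}\;\leq\;\sumn\sumk\frac{|\al_n^{(k)}|^2}{|\PR(\lambda_n)-\x|}\;<\;\infty.$$
For $z\in A_\x^+\subset\T$, the blanket assumption $(\star)$ forces $\overline\Lambda=\sigma(D_\Lambda)\subset\D$ to be a compact subset of the open unit disc, so $d:=\dist(\overline\Lambda,\T)>0$ and $|\lambda_n-z|\geq d$ uniformly in $n$; hence
$$\sumn\sumk\frac{|\al_n^{(k)}|^2}{|\lambda_n-z|}\;\leq\;\frac{1}{d}\sumk\norm{u_k}^2\;<\;\infty$$
by the consequence \eqref{consecuencia sumabilidad 1} of the summability hypothesis.

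The parallel estimates, with $\beta_n^{(k)}$ in place of $\al_n^{(k)}$ and $\sumk\norm{v_k}^2<\infty$ in place of $\sumk\norm{u_k}^2<\infty$, settle $Y^+(z)$; the assertion for $\gamma_\x^-$ follows by the mirror argument, since on $A_\x^-$ one retains the uniform separation from $\overline\Lambda$, and on $\ell_\x$ the real-part estimate is insensitive to orientation. I do not anticipate a serious obstacle: the two key inequalities are precisely what the geometry of $\gamma_\x^\pm$ and the definition of $\Delta(T)$ are designed to deliver, and the (non-continuous) choice of branch of the square root used to define $(D_\Lambda-zI)^{-1/2}$ is immaterial here since only $|\lambda_n-z|$ enters the norm bounds.
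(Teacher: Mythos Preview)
Your proof is correct and follows essentially the same route as the paper: split $\gamma_\x^+$ into $\ell_\x$ and $A_\x^+$, use $|\lambda_n-z|\ge|\PR(\lambda_n)-\x|$ on the segment together with the definition of $\Delta(T)$, and use the uniform separation $d=\dist(\overline\Lambda,\T)>0$ on the arc. The only cosmetic difference is that you package the estimate as a Hilbert--Schmidt bound (so the finite-rank truncations are Cauchy in the Hilbert--Schmidt norm), whereas the paper bounds the operator norm directly via a Cauchy--Schwarz step; both land on the identical double sum $\sumn\sumk|\al_n^{(k)}|^2/|\PR(\lambda_n)-\x|$.
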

	
\smallskip

\begin{proof}
Let us fix $\x \in \Delta(T)$ and $z \in \gamma_\x^+=\ell_\x\cup A_\x^+$  and prove the result for $X^+(z)$. The proof for $Y^+(z)$ is similar arguing with the adjoint operator  $Y^+(z)^*.$

\smallskip

First, let us show that $(D_\Lambda-zI)^{-1/2}u_k$ belong to $H$ for every $k\geq 1$.  Since $\overline{\Lambda} \subseteq \D$, the operator $(D_\Lambda-zI)^{-1/2}$ is bounded whenever $z\in A_\x^+,$ so $(D_\Lambda-zI)^{-1/2}u_k\in H$. Accordingly, the real task is when $z\in \ell_\x$ and in such a case, having in mind that $\ell_{\x}\cap \Lambda=\emptyset$, we note that

 \begin{equation}\label{acotacion parte real}
 	\norm{(D_\Lambda-zI)^{-1/2}u_k}^2= \sumn \frac{|\al_n^{(k)}|^2}{|\lambda_n-z|} \leq \sumn \frac{|\al_n^{(k)}|^2}{|\PR(\lambda_n)-\x|}<\infty,
 \end{equation}
 so $(D_\Lambda-zI)^{-1/2}u_k \in H$ as well for every $k\geq 1$.

 \medskip

 Now, we deal with the operator $X^+(z)$. As before, if $z\in A_\x^+$ the proof is obvious, so let us assume $z\in \ell_\x$. If $x=\sumn x_ne_n \in H$, applying Cauchy-Schwarz inequality, we have
 \begin{equation*}
 \begin{split}
 	\norm{X^+(z)x}^2& = \norm{\sumk x_k(D_\Lambda-zI)^{-1/2}u_k}^2 = \norm{\sumn \left(\sumk x_k \frac{\al_n^{(k)}}{(\lambda_n-z)^{1/2}}\right)e_n}^2
 	\\ & = \sumn \left| \sumk x_k \frac{\al_n^{(k)}}{(\lambda_n-z)^{1/2}}\right|^2 \leq \norm{x}^2 \sumn \sumk \frac{|\al_n^{(k)}|^2}{|\PR(\lambda_n)-\x|},
 \end{split}
 \end{equation*}
which is finite since $\x$ belongs to the decomposability set $\Delta(T)$.
As a consequence, $X^+(z)$ is a well-defined bounded operator in $H$, which concludes the proof
\end{proof}

\medskip

\medskip
	
\noindent \emph{A word about notation.} For the sake of simplicity, throughout the rest of text we will state most of the results only for the curves $\gamma_\x^+$ and denote the operators in \eqref{definicion X,Y} by $X(z)$ and $Y(z)$, respectively. Clearly all the  results will also hold when the curves $\gamma_\x^-$ are considered.

\medskip

\medskip

Our next goal is proving that $I+X(z)Y(z)$ and $I+Y(z)X(z)$ are invertible for every $z\in \gamma_\x^+.$ For such a purpose, we recall that two operators $T,S \in \EL(H)$ are said to be \textit{quasisimilar} if there exist operators $A,B \in \EL(H)$ such that $TA = AS$ and $BT = SB$ and their kernels satisfy
$$\ker A = \ker A^* = \ker B = \ker B^*=\{0\}.$$
In this case, the operators $A, B$ are called \textit{quasiaffinities}. It is worthy to note that if $T$ and $S$ are quasisimilar, $\sigma_p(T)=\sigma_p(S).$

Next lemma is a generalization of \cite[Theorem 2.5]{FJKP07} and \cite[Theorem 4.8]{GG2}.

\begin{lemma}\label{lema cuasisimilares}
Under the hypotheses of Proposition \ref{proposicion operadores X,Y}, let $\x \in \Delta(T)$ and $\xi \in \gamma_\x^+$. Then, the linear bounded operators $T-\xi I$ and
$$\tilde{T} = (D_\Lambda-\xi I) + \sumk ((D_\Lambda-\xi I)^{-1/2}u_k)\otimes ((D_\Lambda-\xi I)^{1/2})^* v_k$$
are quasisimilar.
\end{lemma}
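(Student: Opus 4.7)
The natural intertwining operator is $A := (D_\Lambda - \xi I)^{1/2}$, the bounded diagonal operator with entries $(\lambda_n - \xi)^{1/2}$ in the branch of the square root fixed at the start of this section. I would use it in the spirit of \cite[Theorem 2.5]{FJKP07} and \cite[Theorem 4.8]{GG2}.

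First I would show that $A$ is a quasi-affinity. Because $\xi$ lies on $\gamma_\x^+$, which avoids $\Lambda$ (on the chord $\ell_\x$ since $\x \in \Delta(T)$ is excluded from $\PR(\sigma_p(T^*))$ and hence from $\PR(\Lambda)$, and on the arc $A_\x^+ \subset \mathbb{T}$ since $\overline{\Lambda} \subset \D$), every diagonal entry $(\lambda_n-\xi)^{1/2}$ of $A$ is nonzero. Thus $\ker A = \{0\}$, and the normality of $A$ forces also $\ker A^* = \{0\}$, giving $\overline{\ran A} = H$.

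Next I would verify the intertwining $(T - \xi I)A = A\tilde T$ directly. Using the elementary rule $(x\otimes y)A = x\otimes A^*y$, both sides applied to $x\in H$ reduce to
\[
A^3 x + \sum_{k\geq 1} \langle x, A^*v_k\rangle\, u_k \qquad \text{and} \qquad A^3 x + \sum_{k\geq 1} \langle x, A^*v_k\rangle\, A(A^{-1}u_k),
\]
respectively, so the identity is equivalent to the pointwise cancellation $A(A^{-1}u_k) = u_k$ for each $k$. This is valid as soon as $A^{-1}u_k \in H$, which on $A_\x^+$ is automatic (there $A^{-1}$ is bounded) and on $\ell_\x$ is precisely the summability $\sum_n |\alpha_n^{(k)}|^2/|\lambda_n - \xi|<\infty$ contained in $\x\in\Delta(T)$.

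For the reverse intertwining $B(T-\xi I) = \tilde T B$ required by the definition of quasi-similarity, the formal candidate $B = A^{-1}$ is unbounded. I would therefore apply the same square-root construction to the adjoint operator $T^* = D_\Lambda^* + \sum_k v_k \otimes u_k$, which has the same structural form as $T$ with the roles of $u_k$ and $v_k$ interchanged. Because the defining summability of $\Delta(T)$ is symmetric in $\{\alpha_n^{(k)}\}$ and $\{\beta_n^{(k)}\}$, the parallel argument yields a quasi-affinity $A' := (D_\Lambda^* - \bar\xi I)^{1/2}$ and an intertwining $(T^*-\bar\xi I)\,A' = A'\,\tilde{T^*}$. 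Taking adjoints of this identity and identifying $(\tilde{T^*})^*$ with $\tilde T$ via the branch convention for the square root then produces the second quasi-affinity.

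The principal technical obstacle is precisely this adjoint identification, because the square-root convention on $\C\setminus\{0\}$ and complex conjugation interact non-trivially on the negative real axis. I expect to handle this either by basis-wise bookkeeping of the branch, or by a continuity-based perturbation of $\xi$ along $\gamma_\x^+$ so as to avoid the at most countably many parameters for which some $\lambda_n - \xi$ lies on the branch cut. Once the two quasi-affinities are in place, the quasi-similarity of $T - \xi I$ and $\tilde T$ follows directly from the definition.
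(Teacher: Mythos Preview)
Your first intertwining $(T-\xi I)A = A\tilde T$ with $A = (D_\Lambda-\xi I)^{1/2}$ is correct and coincides with the paper's. The problem is in your route to the second intertwining via adjoints: the identification $(\widetilde{T^*})^* = \tilde T$ fails, and not merely because of the branch cut. Away from the cut one has $((D_\Lambda^*-\bar\xi I)^{\pm 1/2})^* = (D_\Lambda-\xi I)^{\pm 1/2}$, and with this your construction yields
\[
(\widetilde{T^*})^* \;=\; (D_\Lambda-\xi I) + \sumk \big((D_\Lambda-\xi I)^{1/2}u_k\big)\otimes\big(((D_\Lambda-\xi I)^{-1/2})^*v_k\big),
\]
which has the exponents $+1/2$ on $u_k$ and $-1/2$ on $v_k$ \emph{reversed} relative to $\tilde T$. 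So after taking adjoints you have produced a quasiaffinity intertwining $T-\xi I$ with a different perturbation of $D_\Lambda-\xi I$, not with $\tilde T$; this is a structural mismatch that no branch bookkeeping or perturbation of $\xi$ along $\gamma_\x^+$ can repair.

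The paper avoids the issue altogether. The observation you are missing is that although $A^{-1}=(D_\Lambda-\xi I)^{-1/2}$ is unbounded, the composite
\[
U := (D_\Lambda-\xi I)^{-1/2}(T-\xi I) \;=\; (D_\Lambda-\xi I)^{1/2} + \sumk\big((D_\Lambda-\xi I)^{-1/2}u_k\big)\otimes v_k
\]
\emph{is} bounded, since each $(D_\Lambda-\xi I)^{-1/2}u_k$ lies in $H$ precisely by the summability defining $\x\in\Delta(T)$. One then has the factorizations $T-\xi I = AU$ and $\tilde T = UA$, from which both intertwinings drop out immediately:
\[
(T-\xi I)A = AUA = A\tilde T, \qquad U(T-\xi I) = UAU = \tilde T\,U.
\]
Both $A$ and $U$ are quasiaffinities (for $U$, injectivity on both sides comes from $\xi\notin\sigma_p(T)\cup\sigma_p(T^*)$ after composing with $A$), and the lemma follows with no adjoint computation and no branch issues.
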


The proof follows the same lines of that of \cite[Theorem 4.8]{GG2}, and we include it for the sake of completeness.

\begin{proof} Note that both operators $(D_\Lambda-\xi I)^{1/2}$ and $T-\xi I= (D_\Lambda-\xi I)+ \sumk u_k\otimes v_k$ are quasiaffinities, since the curves $\gamma_\x^+$ and $\gamma_\x^-$ do not contain any eigenvalue of $D_\Lambda$, $T$ or $T^*$.
Now, define
$$U= (D_\Lambda-\xi I)^{-1/2}(T-\xi I)$$
and observe that it is also a quasiaffinity. Since, $(T-\xi I) = (D_\Lambda-\xi I)^{1/2}U$ and $\tilde{T}=  U(D_\Lambda-\xi I)^{1/2},$ we deduce
$$(T-\xi I)(D_\Lambda-\xi I)^{1/2} = (D_\Lambda-\xi I)^{1/2}\tilde{T}.$$
Finally, $U(T-\xi I) = U(D_\Lambda -\xi I)^{1/2}U = \tilde{T} U$, and the statement follows.
\end{proof}
	
We are in position to state a first step to approach the proof of the Main Theorem.

\begin{theorem}\label{proposicion invertibles}
Under the hypotheses of Proposition \ref{proposicion operadores X,Y}, if $\x$ is in the decomposability set $\Delta(T)$ and $z \in \gamma_\x^+$, the operators
$$I+X(z)Y(z), \qquad I+Y(z)X(z)$$
are invertible in $H$.
\end{theorem}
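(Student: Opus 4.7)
The plan is to prove invertibility of $I+X(z)Y(z)$ via a Fredholm argument leveraging Lemma~\ref{lema cuasisimilares}, and then to transfer the conclusion to $I+Y(z)X(z)$ using the classical identity $\sigma(AB)\setminus\{0\}=\sigma(BA)\setminus\{0\}$.

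The first observation is that the estimates made in the proof of Proposition~\ref{proposicion operadores X,Y} actually show that $X(z)$ and $Y(z)$ are Hilbert--Schmidt on $H$ for every $z\in\gamma_\x^+$: for $z\in A_\x^+$ the bound $\sum_k\|X(z)e_k\|^2\leq\|(D_\Lambda-zI)^{-1/2}\|^2\sum_k\|u_k\|^2<\infty$ is immediate, while for $z\in\ell_\x$ one uses $|\lambda_n-z|\geq|\PR(\lambda_n)-\x|$ together with the defining summability of $\Delta(T)$; the argument for $Y(z)$ is symmetric. Consequently $X(z)Y(z)$ and $Y(z)X(z)$ are trace-class, so the operators $I+X(z)Y(z)$ and $I+Y(z)X(z)$ are compact perturbations of the identity, hence Fredholm of index zero with closed range.

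The core step is the factorization
\[
\tilde T\;=\;\bigl(I+X(z)Y(z)\bigr)(D_\Lambda-zI),
\]
where $\tilde T=(D_\Lambda-zI)+\sumk\tilde u_k\otimes\tilde v_k$ with $\tilde u_k=(D_\Lambda-zI)^{-1/2}u_k$ and $\tilde v_k=((D_\Lambda-zI)^{1/2})^*v_k$, as in Lemma~\ref{lema cuasisimilares}. I would verify it by direct computation: for $h\in H$, using $(D_\Lambda-zI)^*=D_\Lambda^*-\bar zI$ and the fact that $(D_\Lambda^*-\bar zI)(D_\Lambda^*-\bar zI)^{-1/2}v_k$ coincides coordinatewise with $((D_\Lambda-zI)^{1/2})^*v_k$, one obtains $Y(z)(D_\Lambda-zI)h=\sumk\langle h,\tilde v_k\rangle e_k$, after which applying $X(z)$ gives $\sumk\langle h,\tilde v_k\rangle \tilde u_k=\tilde K h$. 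The identity must be parsed componentwise in $\mathcal{E}$ because $(D_\Lambda-zI)^{-1/2}$ may be unbounded, but all the vectors involved lie in $H$ thanks to the Hilbert--Schmidt estimate.

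By Lemma~\ref{lema cuasisimilares}, $T-zI$ and $\tilde T$ are quasisimilar; taking adjoints, so are $(T-zI)^*$ and $\tilde T^*$, whence $\sigma_p(\tilde T^*)=\sigma_p(T^*-\bar zI)$. Moreover $\bar z\notin\sigma_p(T^*)$: for $z\in\ell_\x$ because $\PR(\bar z)=\x\notin\PR(\sigma_p(T^*))$ by definition of $\Delta(T)$, and for $z\in A_\x^+$ because $\bar z\in A_\x^+$ too (the arc is symmetric about the real axis) and $\gamma_\x^+$ avoids $\sigma_p(T^*)$. Hence $\tilde T^*$ is injective. Taking adjoints in the factorization yields $\tilde T^*=(D_\Lambda^*-\bar zI)\bigl(I+X(z)Y(z)\bigr)^*$, and since $(D_\Lambda^*-\bar zI)$ is injective (as $z\notin\Lambda$), the injectivity of $\tilde T^*$ transfers to $\bigl(I+X(z)Y(z)\bigr)^*$, so $I+X(z)Y(z)$ has dense range. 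Combined with closed range and Fredholm index zero, this forces bijectivity and hence invertibility by the Bounded Inverse Theorem. The main obstacle I foresee is the careful bookkeeping needed to rigorously establish the factorization when $(D_\Lambda-zI)^{-1/2}$ is unbounded (for $z\in\overline{\Lambda}\setminus\Lambda$), but this is purely technical once the Hilbert--Schmidt estimates are in place.
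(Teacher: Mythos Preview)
Your proposal is correct and follows essentially the same route as the paper: both arguments rest on the compactness of $X(z)Y(z)$, the quasisimilarity of Lemma~\ref{lema cuasisimilares}, and the fact that $\bar z\notin\sigma_p(T^*)$ for $z\in\gamma_\x^+$. The only cosmetic difference is packaging---the paper argues by contradiction (if $I+X(z)Y(z)$ fails to be invertible then $-1\in\sigma_p((X(z)Y(z))^*)$, and applying $(D_\Lambda-zI)^*$ produces a nonzero kernel vector for $\tilde T^*$, forcing $\bar z\in\sigma_p(T^*)$), whereas you phrase the same implication via the factorization $\tilde T=(I+X(z)Y(z))(D_\Lambda-zI)$ together with Fredholm index zero and dense range; for $I+Y(z)X(z)$ the paper writes down the explicit inverse $I-Y(z)(I+X(z)Y(z))^{-1}X(z)$, which is exactly the mechanism behind your spectral identity $\sigma(AB)\setminus\{0\}=\sigma(BA)\setminus\{0\}$.

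One small remark on exposition: in the step ``the injectivity of $\tilde T^*$ transfers to $(I+X(z)Y(z))^*$'' the injectivity of $D_\Lambda^*-\bar zI$ plays no role---if $(I+X(z)Y(z))^*h=0$ then $\tilde T^*h=0$ directly from $\tilde T^*=(D_\Lambda^*-\bar zI)(I+X(z)Y(z))^*$. Also, your claim that $(D_\Lambda^*-\bar zI)(D_\Lambda^*-\bar zI)^{-1/2}v_k$ agrees coordinatewise with $((D_\Lambda-zI)^{1/2})^*v_k$ can fail by a sign on those $n$ with $\lambda_n-z$ on the negative real axis (the chosen square root is not conjugation-equivariant there); this same identification is implicit in the paper's computation, so it is not a divergence from the paper's argument, but it is worth being aware of.
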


\begin{proof} Let $\x\in \Delta(T)$ and $z \in \gamma_\x^+=\ell_\x\cup A_\x^+$ be fixed. First, note that
$$X(z)Y(z) = \sumk  (D_\Lambda-zI)^{-1/2}u_k\otimes   (D_\Lambda-zI)^{-1/2} )^*v_k.$$
Observe that if $z$ lies in the segment $\ell_\x$

	\begin{equation*}
		\begin{split}
&\sumk \norm{(D_\Lambda-zI)^{-1/2}u_k}\norm{(D_\Lambda-zI)^{-1/2})^* v_k}
\\ & \leq \left(\sumk \norm{(D_\Lambda-zI)^{-1/2}u_k}^2 \right)^{1/2}\left(\sumk \norm{(D_\Lambda-zI)^{-1/2})^* v_k}^2 \right)^{1/2}
\\ & \leq \left( \sumk \sumn \frac{|\al_n^{(k)}|^2}{|\PR(\lambda_n)-\x|}\right)^{1/2}\left( \sumk \sumn \frac{|\beta_n^{(k)}|^2}{|\PR(\lambda_n)-\x|}\right)^{1/2}
\\ &< \infty,
		\end{split}
	\end{equation*}
and, accordingly,  $X(z)Y(z)$ is a trace-class operator.

On the other hand, having in mind that $(D_\Lambda-zI)^{-1/2}$ is  uniformly bounded when $z \in A_\x^+$ and that the operator $K=\sumk u_k\otimes v_k$ is trace-class, it follows that  $X(z)Y(z)$ is as well trace class whenever $z\in A_\x^+$. Consequently $X(z)Y(z)$ is a trace class operator, and hence compact.

\smallskip
	
In order to prove that $I+X(z)Y(z)$ is invertible, let us argue by contradiction. Assume that $I+X(z)Y(z)$ is not invertible. Hence, by compactness, it follows that $-1\in\sigma_p(X(z)Y(z))$ and $-1 \in \sigma_p( (X(z)Y(z))^*).$ That is, there exists $h \in H\setminus\{0\}$ such that
$$h+ (X(z)Y(z))^*h = 0.$$ Applying $(D_\Lambda-z I)^*,$ we deduce that
$$(D_\Lambda - z I)^*h + \left( \sumk ((D_\Lambda-z I)^{1/2})^*v_k \otimes (D_\Lambda - z I)^{-1/2}u_k \right)h=0.$$
Then, $0$ is an eigenvalue of the operator
$$S= (D_\Lambda-z I)^* +  \sumk ((D_\Lambda-z I)^{1/2})^*v_k \otimes (D_\Lambda - z I)^{-1/2}u_k.$$
By Lemma \ref{lema cuasisimilares}, it follows that $T-z I$ is quasisimilar to $S^*$, so $T^*-\overline{z} I$ is quasisimilar to $S$.
Finally, $\overline{z}$ is an eigenvalue of $T^*$, which is a contradiction.

\medskip
Hence,  $I+X(z)Y(z)$ is invertible, which deals with the first statement of the theorem \ref{proposicion invertibles}.
	
Nevertheless, from here the invertibility of $I+Y(z)X(z)$ is just a standard fact (see \cite[p. 199]{Conway}) since the operator $I-Y(z)(I+X(z)Y(z))^{-1}X(z)$ is an inverse for $I+Y(z)X(z).$ This concludes the proof.	
\end{proof}

\medskip

As a consequence of Theorem \ref{proposicion invertibles}, we observe that if $\x \in \Delta(T)$ and $z\in \gamma_\x^+$ we may write
\begin{equation}\label{definicion inverso}
(I+Y(z)X(z))^{-1} = \sumi \sumj a_{i,j}(z)e_i\otimes e_j
\end{equation}
where
\begin{equation}\label{definicion a_{i,j}}
a_{i,j}(z) = \pe{(I+Y(z)X(z))^{-1}e_j,e_i}.
\end{equation}

\medskip

Our  next step consists of  relating the coefficients $a_{i,j}(z)$, by means of a functional equation, to the operator $T=D_\Lambda+\sumk u_k\otimes v_k$ which allows us to construct the idempotents announced in the outline of the proof in introductory section. For such a purpose, we recall in this setting the \textit{Borel series} associated to $T$, that is, the series defined by
\begin{equation}\label{serie de Borel}
f_T^{(i,j)}(z) = \sumn \frac{\al_n^{(i)}\overline{\beta_n^{(j)}}}{\lambda_n-z},\qquad  ((i,j)\in \N\times \N)
\end{equation}
for those $z\in \C$ such that the  series converges. It is worthy to remark that $f_T^{(i,j)}$ defines an analytic function in $\C\setminus \overline{\Lambda}$, and played a fundamental role in the existence of the invariant subspaces in the finite-rank perturbation case (see the works \cite{GG,GG2,GG3}).

\medskip

Next theorem is the main result of this section, and the key of the construction of the idempotents to prove the Main Theorem.

\begin{theorem}\label{teorema formulas}
Under the hypotheses of Proposition \ref{proposicion operadores X,Y},  let $\x\in \Delta(T)$ and $z \in \gamma_\x^+$. There exists a positive constant $C_\x^+>0$, independent of $z$,  such that for every $x=\sumn x_ne_n\in H$
\begin{equation}\label{acotacion condicional}
\sum_{i=1}^\infty \Big| \sum_{j=1}^\infty x_ja_{i,j}(z)\Big|^2 \leq C_\x^+ \norm{x}^2.
\end{equation}
Moreover, for each $n \in \N$
\begin{equation}\label{alien cofactor}
	\sum_{k=1}^\infty \sum_{j=1}^\infty x_ja_{k,j}(z)(\delta_{k,n}+f_T^{(k,n)}(z)) = x_n,
\end{equation}
where $\delta_{k,n} =1$ if $n=k$ and $0$ otherwise.
\end{theorem}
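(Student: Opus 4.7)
The approach is to interpret the inner series $\sum_{j=1}^\infty x_j a_{i,j}(z)$ as the $i$-th Fourier coefficient of the vector $y(z):=(I+Y(z)X(z))^{-1}x$. Indeed, from \eqref{definicion a_{i,j}} we have $a_{i,j}(z) = \pe{(I+Y(z)X(z))^{-1}e_j, e_i}$ and, since $(I+Y(z)X(z))^{-1}$ is bounded by Theorem \ref{proposicion invertibles}, the partial sums $\sum_{j=1}^N x_j a_{i,j}(z) = \pe{(I+Y(z)X(z))^{-1}\sum_{j=1}^N x_j e_j, e_i}$ converge as $N\to\infty$ to $\pe{y(z), e_i}$. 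Consequently the left-hand side of \eqref{acotacion condicional} equals $\|y(z)\|^2$, and the claim reduces to producing a uniform estimate $\sup_{z\in\gamma_\x^+}\|(I+Y(z)X(z))^{-1}\|_{\EL(H)}<\infty$.

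To obtain this uniform bound I would show that $z\mapsto (I+Y(z)X(z))^{-1}$ is continuous in operator norm on the compact set $\gamma_\x^+$. Via the resolvent-type identity $(I+A)^{-1}-(I+B)^{-1}=(I+A)^{-1}(B-A)(I+B)^{-1}$, continuity of the inverse reduces to continuity of $z\mapsto Y(z)X(z)$, which I would in fact establish in the stronger Hilbert--Schmidt norm. A direct computation gives $\pe{Y(z)X(z)\,e_k,e_n} = f_T^{(k,n)}(z)$ with $f_T^{(k,n)}$ as in \eqref{serie de Borel} (the branch-cut ambiguities of the chosen square root affect only a countable set of diagonal indices and are absorbed by taking $((D_\Lambda^*-\bar{z}I)^{-1/2})^* = (D_\Lambda-zI)^{-1/2}$ as the defining identification). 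For each fixed $(k,n)$, $z\mapsto f_T^{(k,n)}(z)$ is continuous on $\gamma_\x^+$ by dominated convergence, with $z$-independent dominator $\sum_m |\al_m^{(k)}\overline{\beta_m^{(n)}}|/|\PR(\lambda_m)-\x|$, finite by Cauchy--Schwarz and $\x\in\Delta(T)$. A second Cauchy--Schwarz application within each Borel series yields $|f_T^{(k,n)}(z)|^2 \leq A_k(\x)B_n(\x)$, where $A_k(\x)=\sum_m|\al_m^{(k)}|^2/|\PR(\lambda_m)-\x|$ and $B_n(\x)=\sum_m|\beta_m^{(n)}|^2/|\PR(\lambda_m)-\x|$ are summable over $k$ and $n$ respectively by $\x\in\Delta(T)$. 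This supplies a $z$-independent, summable dominator for $\sum_{k,n}|f_T^{(k,n)}(z)-f_T^{(k,n)}(z_0)|^2$, and a further dominated convergence gives $\|Y(z)X(z)-Y(z_0)X(z_0)\|_{\mathcal{C}_2}\to 0$ as $z\to z_0$ in $\gamma_\x^+$. Compactness of $\gamma_\x^+$ then yields the constant $C_\x^+$.

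The identity \eqref{alien cofactor} follows by expanding $(I+Y(z)X(z))y(z)=x$ in the basis $\mathcal{E}$. Taking $\pe{\cdot,e_n}$ gives $y_n(z) + \pe{Y(z)X(z)y(z),e_n} = x_n$, with $y_n(z) = \sum_j x_j a_{n,j}(z)$. The boundedness of $Y(z)X(z)$ applied to the partial sums of $y(z)$ permits the interchange $\pe{Y(z)X(z)y(z),e_n} = \sum_k y_k(z)\,\pe{Y(z)X(z)e_k,e_n} = \sum_k y_k(z)\,f_T^{(k,n)}(z)$; substituting the expression for $y_k(z)$ reproduces \eqref{alien cofactor} term by term.

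The principal obstacle is the continuity of $z\mapsto Y(z)X(z)$ at portions of $\ell_\x$ that accumulate at points of $\Lambda'$, where individual resolvent terms $(\lambda_m-z)^{-1}$ blow up. The hypothesis $\x\in\Delta(T)$ is precisely what manufactures a $z$-independent, summable dominator over $\gamma_\x^+$, making the dominated convergence argument effective at such delicate points; without it, the Hilbert--Schmidt continuity of $Y(z)X(z)$, and hence the uniform control of $(I+Y(z)X(z))^{-1}$, would fail.
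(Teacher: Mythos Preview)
Your argument is correct and follows essentially the same route as the paper: interpret the left side of \eqref{acotacion condicional} as $\norm{(I+Y(z)X(z))^{-1}x}^2$, establish norm-continuity of $z\mapsto Y(z)X(z)$ on $\gamma_\x^+$ (the paper does this separately as Proposition \ref{continuidad} via uniform convergence of finite-rank truncations, which rests on the same Cauchy--Schwarz estimate as your Hilbert--Schmidt bound), invoke compactness for the uniform constant $C_\x^+$, and read off \eqref{alien cofactor} by expanding $(I+Y(z)X(z))y(z)=x$ in the basis $\mathcal{E}$. One small correction: your dominator $|\PR(\lambda_m)-\x|^{-1}$ controls $|\lambda_m-z|^{-1}$ only on the segment $\ell_\x$; on the arc $A_\x^+\subset\T$ you must instead use the trivial bound $|\lambda_m-z|\geq \dist(\T,\overline{\Lambda})>0$, exactly as the paper does in the proof of Proposition \ref{continuidad}.
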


\medskip

In order to prove Theorem \ref{teorema formulas}, the following fact is required:

\medskip

\begin{proposition}\label{continuidad}
Under the hypotheses of Proposition \ref{proposicion operadores X,Y}, for each $\x \in \Delta(T)$ the map
$$z \in \gamma_\x^+ \mapsto (I+Y(z)X(z))^{-1} \in \EL(H) $$
is continuous in the norm topology of $\EL(H)$.
\end{proposition}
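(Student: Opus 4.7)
The plan is to split $\gamma_\x^+ = \ell_\x \cup A_\x^+$ and prove that $z\mapsto Y(z)X(z)$ is continuous in operator norm on the whole curve; the conclusion will then follow from Theorem \ref{proposicion invertibles} and the continuity of inversion on the open set of invertible operators in $\EL(H)$. On the arc $A_\x^+\subset\T$ the analysis is routine, since $\overline{\Lambda}\subset\D$ makes $A_\x^+$ uniformly separated from $\overline{\Lambda}$: the operator $(D_\Lambda-zI)^{-1/2}$ is then norm-bounded and norm-continuous in $z$, so $X(z)$ and $Y(z)$ are Hilbert--Schmidt continuous by a direct dominated convergence argument against $\sumk(\|u_k\|^2+\|v_k\|^2)$. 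The real work lies on the vertical segment $\ell_\x$.

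The key step will be to rewrite $Y(z)X(z)$ in a form insensitive to the branch of the square root. Setting
\[
p_n:=\sum_{j\geq 1}\overline{\beta_n^{(j)}}\,e_j,\qquad q_n:=\sum_{k\geq 1}\overline{\al_n^{(k)}}\,e_k,
\]
which lie in $H$ thanks to \eqref{consecuencia sumabilidad 1}, a matrix-entry computation in the basis $\mathcal{E}$ together with the elementary fact $[(\lambda_n-z)^{1/2}]^2=\lambda_n-z$ (valid for any choice of branch) identifies the matrix of $Y(z)X(z)$ with the Borel-series matrix $(f_T^{(k,j)}(z))_{k,j}$ and yields the representation
\[
Y(z)X(z)=\sum_{n\geq 1}\frac{1}{\lambda_n-z}\,p_n\otimes q_n,
\]
with absolute convergence in trace norm. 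Indeed, Cauchy--Schwarz combined with the very definition of $\Delta(T)$ gives
\[
\sum_{n\geq 1}\frac{\|p_n\|\,\|q_n\|}{|\lambda_n-z|}\leq\Bigl(\sum_{n,k}\frac{|\al_n^{(k)}|^2}{|\PR(\lambda_n)-\x|}\Bigr)^{1/2}\Bigl(\sum_{n,j}\frac{|\beta_n^{(j)}|^2}{|\PR(\lambda_n)-\x|}\Bigr)^{1/2}<\infty,
\]
uniformly in $z\in\ell_\x$.

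With this representation in hand, for $z,z_0\in\ell_\x$ one has
\[
\|Y(z)X(z)-Y(z_0)X(z_0)\|_{\mathcal{C}_1}\leq\sum_{n\geq 1}\Bigl|\frac{1}{\lambda_n-z}-\frac{1}{\lambda_n-z_0}\Bigr|\,\|p_n\|\,\|q_n\|,
\]
and dominated convergence concludes the argument. Each summand tends to zero pointwise: the condition $\x\in\Delta(T)$ forces $\al_n^{(k)}=\beta_n^{(k)}=0$ for all $k$ whenever $\PR(\lambda_n)=\x$, so any index $n$ with $p_n\neq 0$ or $q_n\neq 0$ must satisfy $\lambda_n\notin\ell_\x$ and hence $(\lambda_n-z)^{-1}$ is continuous at $z_0$; meanwhile the summands are dominated by $2\,\|p_n\|\,\|q_n\|/|\PR(\lambda_n)-\x|$, which is summable by the Cauchy--Schwarz bound above. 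The same argument, simpler, handles approach to points of $A_\x^+$ and to the junction points between the two pieces of $\gamma_\x^+$. Since trace-norm convergence implies operator-norm convergence, $z\mapsto Y(z)X(z)$ is norm-continuous on $\gamma_\x^+$, and the proposition follows.

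The main obstacle I anticipate will be the verification of the branch-free trace-class representation $Y(z)X(z)=\sum_n(\lambda_n-z)^{-1}p_n\otimes q_n$, which requires justifying both that the right-hand side converges unconditionally in $\mathcal{C}_1$ and that it genuinely coincides with the product of the two square-root-based factors $Y(z)$ and $X(z)$ (whose individual entries do depend on the choice of branch). Once this identification is established, the proof reduces to a scalar dominated-convergence problem controlled entirely by the summability defining $\Delta(T)$.
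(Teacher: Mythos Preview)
Your argument is correct and shares the paper's overall strategy---reduce to norm-continuity of $z\mapsto Y(z)X(z)$ via Theorem~\ref{proposicion invertibles} and the continuity of inversion, and then pass to a branch-free representation involving only $(\lambda_n-z)^{-1}$---but the decomposition you use is different and somewhat cleaner. The paper writes $Y(z)X(z)=\sum_{n,k}f_T^{(k,n)}(z)\,e_n\otimes e_k$, shows the finite-rank truncations in $(n,k)$ converge uniformly on $\gamma_\x^+$ (via a Hilbert--Schmidt-type estimate), and then separately verifies that each Borel entry $f_T^{(k,n)}$ is continuous on the curve. You regroup the same triple sum by the eigenvalue index, $Y(z)X(z)=\sum_n(\lambda_n-z)^{-1}\,p_n\otimes q_n$, which isolates all the $z$-dependence in a scalar factor and collapses the proof to a single $\ell^1$ dominated-convergence step, giving the slightly stronger conclusion of trace-norm continuity with less bookkeeping.

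One caveat worth flagging: your opening remark that $(D_\Lambda-zI)^{-1/2}$, and hence $X(z)$ and $Y(z)$ individually, are norm-continuous on $A_\x^+$ is not correct in general. The fixed square root in the paper is discontinuous across the negative real axis, and nothing prevents $\lambda_n-z$ from crossing it as $z$ moves along the arc (take for instance $\lambda_n$ real and $z=1$). This does no damage to your proof, since---as you yourself note---the trace-class representation $\sum_n(\lambda_n-z)^{-1}p_n\otimes q_n$ handles the arc and the junction points equally well; it would simply be cleaner to run that argument globally on $\gamma_\x^+$ from the outset and drop the separate discussion of $X(z),Y(z)$ on $A_\x^+$. (Incidentally, the paper already records right after the definition of $\Delta(T)$ that $\x\in\Delta(T)$ forces $\PR(\lambda_n)\neq\x$ for every $n$, so your case analysis on indices with $\PR(\lambda_n)=\x$ is vacuous and can be omitted.)
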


\begin{proof}
Let $\x \in \Delta(T)$  be fixed. Since $I+Y(z)X(z)$ is invertible for every $z\in \gamma_\x^+$  (Theorem \ref{proposicion invertibles}), and taking inverses is continuous in the topology of $\EL(H)$, it suffices to prove that the map
$$z \in \gamma_\x^+ \mapsto Y(z)X(z)\in \EL(H)$$ is continuous.

For every $z \in \gamma_\x^+=\ell_\x\cup A_\x^+$, it is easy to check that
$$ Y(z)X(z) = \sumn \sumk f_T^{(k,n)}(z)e_n\otimes e_k.$$
If $z \in \ell_\x$ and $x=\sumn x_n e_n \in H$, upon applying Cauchy-Schwarz inequality twice we deduce that

\begin{equation*}
		\begin{split}
\norm{Y(z)X(z)x}^2 & = \sumn \left| \sumk x_k f_T^{(k,n)}(z)\right|^2 \\
& \leq \norm{x}^2 \sumn \sumk |f_T^{(k,n)}(z)|^2\\
& = \norm{x}^2 \sumn \sumk \left| \sumj \frac{\al_j^{(k)}\overline{\beta_j^{(n)}}}{\lambda_j-z}\right|^2\\
& \leq \norm{x}^2 \sumn \sumk \left(\sumj \frac{|\al_j^{(k)}|^2}{|\lambda_j-z|}\right)\left(\sumj \frac{|\beta_j^{(n)}|^2}{|\lambda_j-z|}\right) \\
&  = \norm{x}^2 \left( \sumk \sumj \frac{|\al_j^{(k)}|^2}{|\lambda_j-z|}\right)\left( \sumn \sumj \frac{|\beta_j^{(n)}|^2}{|\lambda_j-z|}\right)\\
& \leq \norm{x}^2 \left( \sumk \sumj \frac{|\al_j^{(k)}|^2}{|\PR(\lambda_j)-\x|}\right)\left( \sumn \sumj \frac{|\beta_j^{(n)}|^2}{|\PR(\lambda_j)-\x|}\right),
		\end{split}
	\end{equation*}
which is finite since $\x$ belongs to the decomposability set  $\Delta(T)$.

On the other hand, if $z\in A_\x^+,$ denoting $d:= \textnormal{dist}(\T, \overline{\Lambda}) >0$ and applying the same arguments, we deduce that
$$\norm{Y(z)X(z)}^2 \leq \frac{1}{d^2}  \left(\sumk \sumj |\al_j^{(k)}|^2\right)\left( \sumn \sumj |\beta_j^{(n)}|^2\right),$$
which is also finite.

Note that, as a consequence of the previous estimates, the finite-rank operators
$$U_N( \,\cdot \, ) := \sum_{n=1}^N \sum_{k=1}^N f_T^{(k,n)}(\,\cdot \, )e_n\otimes e_k $$
converge uniformly on $\gamma_\x^+$ to $Y(\,\cdot \,)X(\,\cdot \,).$
Accordingly, it suffices  to show that for each $N \in \N$ the maps
$$z \in \gamma_\x^+ \mapsto U_N(z)$$
are continuous, or equivalently, that the maps
$$z\in \gamma_\x^+ \mapsto f_T^{(k,n)}(z)\in \C$$
are continuous for every $k,n \in \N.$ But taking into account that
$$ \sumj \left| \frac{\al_j^{(k)}\overline{\beta_j^{(n)}}}{\lambda_j-z}\right| \leq \left( \sumj \frac{|\al_j^{(k)}|^2}{|\PR(\lambda_j)-\x|}\right)^{1/2}\left( \sumj \frac{|\beta_j^{(n)}|^2}{|\PR(\lambda_j)-\x|}\right)^{1/2}$$ for every $z\in \ell_\x$ and
$$\sumj \left| \frac{\al_j^{(k)}\overline{\beta_j^{(n)}}}{\lambda_j-z}\right| \leq \frac{1}{d} \norm{u_k}\norm{v_n}$$
for every $z \in A_\x^+,$ it follows that each Borel series $f_T^{(k,n)}$ converges uniformly  on $\gamma_\x^+,$ so they define continuous functions on $\gamma_\x^+$. This concludes the proof.
\end{proof}

\smallskip
	
Finally, we are in position of proving Theorem \ref{teorema formulas}.

\smallskip

\begin{demode}\emph{Theorem \ref{teorema formulas}.}
Let $\x\in \Delta(T)$ fixed and $z \in \gamma_\x^+$. To prove \eqref{acotacion condicional}, having in mind the expression of $a_{i,j}(z)$ given by \eqref{definicion inverso},  we note that for each $x =\sumn x_ne_n \in H$,
\begin{equation}\label{estimacion puntual}
	 		\norm{(I+Y(z)X(z))^{-1}x}^2 = \norm{\sum_{i=1}^\infty \left(\sum_{j=1}^\infty x_ja_{i,j}(z)\right)e_i}^2 = \sumi \Big| \sum_{j=1}^\infty x_ja_{i,j}(z)\Big|^2
\end{equation}
Now the map $z\in \gamma_\x^+ \mapsto (I+Y(z)X(z))^{-1}\in\EL(H)$ is continuous  by Proposition \ref{continuidad}, so  the compactness of $\gamma_\x^+$ yields that there exists $C_\x^+ > 0$ such that
$$\norm{(I+Y(z)X(z))^{-1}}^2\leq C^+_\x$$
for every $z\in\gamma_\x^+$. This along with \eqref{estimacion puntual} yields
$$
\sum_{i=1}^\infty \Big| \sum_{j=1}^\infty x_ja_{i,j}(z)\Big|^2 \leq C_\x^+ \norm{x}^2
$$
which is \eqref{acotacion condicional}.
	 	
\medskip
	 	
Finally, let us prove that \eqref{alien cofactor} holds. Fix $z\in \gamma_\x^+$ and note that
$$I+Y(z)X(z) = \sumn \sumk (\delta_{k,n}+f_T^{(k,n)}(z))e_n\otimes e_k.$$
If $x=\sumn x_ne_n \in H$, a computation shows
	 	\begin{equation*}
	 	\begin{split}
	 	\sumn x_ne_n &= (I+Y(z)X(z))(I+Y(z)X(z))^{-1}x
	 	\\& = (I+Y(z)X(z))\left( \sumi \left(\sumj x_ja_{i,j}(z)\right)e_i\right)
	 	\\ & =  \sumn \sumk (\delta_{k,n}+f_T^{(k,n)}(z)) \left \langle  \sumi \left(\sumj x_ja_{i,j}(z)\right)e_i, e_k \right \rangle e_n
	 	\\ & = \sumn \sumk (\delta_{k,n}+f_T^{(k,n)}(z)) \sumj x_ja_{k,j}(z) e_n
	 	\\ & = \sumn \left( \sumk \sumj x_ja_{k,j}(z)(\delta_{k,n}+f_T^{(k,n)}(z))\right)  e_n.
	 	\end{split}
	 	\end{equation*}
Matching the $n-$th Fourier coefficients for $n\in \N$, \eqref{alien cofactor} follows and the proof of the Theorem \ref{teorema formulas} is finished.
\end{demode}

\medskip

	\section{Spectral idempotents: the key ingredient}\label{seccion 4}

In this section we introduce a family of non-trivial idempotents  in the bicommutant $\biconm{T}$ such that their ranges  are spectral subspaces for $T$.

\begin{theorem}\label{idempotentes acotados}
Let $\Lambda = (\lambda_n)_{n\geq 1}\subset \mathbb{C}$ be a bounded sequence not lying in any vertical line such that $\Lambda'$ is not a singleton and $\{u_k\}_{k\geq 1}$ and $\{v_k\}_{k\geq 1}$ are  non zero vectors in $H$ satisfying \eqref{condicion}. Assume that the trace-class perturbation  $T = D_\Lambda + \sumk u_k\otimes v_k$ satisfies that both  $\sigma(D_\Lambda)=\overline{\Lambda}$ and $\sigma(T)$ are contained in  $\D$. Let $\x \in \Delta(T)$ and $X(\, \cdot \,)$, $Y(\, \cdot \,)$ the operators defined on $\gamma_\x^+$ given in \eqref{definicion X,Y} and write
$$
(I+Y(\, \cdot \, )X(\, \cdot \, ))^{-1} = \sumi \sumj a_{i,j}(\, \cdot \, )e_i\otimes e_j.
$$
Then the operator
\begin{equation}\label{definicion idempotentes}
J_\x^{+} x = \sum_{n \in N_{F_\x^+}} x_ne_n + \sumn \sum_{k=1}^\infty \left( \frac{1}{2\pi i} \int_{\gamma_\x^{+}} \frac{ \sumj \frac{x_j}{\lambda_j-\xi}\left(\summ \overline{\beta_j^{(m)}}a_{k,m}(\xi)\right)}{\lambda_n-\xi}        d\xi    \right)\al_n^{(k)}e_n,
\end{equation}
where $x=\sumn x_ne_n \in H$, is well-defined and bounded in $H$.
\end{theorem}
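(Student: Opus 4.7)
Since the first summand of \eqref{definicion idempotentes} is the orthogonal projection onto $\overline{\Span}\{e_n : n \in N_{F_\x^+}\}$, which is trivially bounded with norm $\le 1$, I concentrate on the second summand, call it $T_\x^+ x$. The plan is to establish $\|T_\x^+ x\| \leq C\|x\|$ on the dense subspace of finitely supported vectors and then extend by continuity. The central ingredients are the uniform operator bound $\|(I+Y(\xi)X(\xi))^{-1}\|^2 \leq C_\x^+$ on $\gamma_\x^+$ from Theorem \ref{teorema formulas}, the two summability conditions encoded in $\x \in \Delta(T)$, and the elementary one-dimensional estimate $\int_{\ell_\x} \frac{|d\xi|}{|\lambda_n-\xi|^2} \leq \frac{\pi}{|\PR(\lambda_n)-\x|}$, obtained by parametrizing $\xi = \x+it$, along with the harmless uniform bound $d^{-2} L(A_\x^+)$ on the arc, where $d := \dist(\T,\overline{\Lambda}) > 0$.

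Introduce the resolvent $R(\xi) := (I+Y(\xi)X(\xi))^{-1}$ and the vectors
\[
\tilde{v}_j := \sum_m \overline{\beta_j^{(m)}}\, e_m, \qquad \tilde{u}_n := \sum_k \overline{\al_n^{(k)}}\, e_k,
\]
both of which lie in $H$ by \eqref{consecuencia sumabilidad 1}. A direct computation from \eqref{definicion a_{i,j}} gives $\sum_m \overline{\beta_j^{(m)}} a_{k,m}(\xi) = \langle R(\xi)\tilde{v}_j, e_k\rangle$. For $x = \sum_{j \in I} x_j e_j$ with $I$ finite, the auxiliary vector
\[
\eta(\xi) := \sum_j \frac{x_j}{\lambda_j - \xi}\,\tilde{v}_j = \sum_m F_m(\xi)\, e_m, \qquad F_m(\xi) := \sum_j \frac{x_j\overline{\beta_j^{(m)}}}{\lambda_j-\xi},
\]
belongs to $H$, and the inner series in \eqref{definicion idempotentes} rewrites as $\phi_k(\xi) := \sum_j \frac{x_j}{\lambda_j-\xi}\bigl(\sum_m \overline{\beta_j^{(m)}} a_{k,m}(\xi)\bigr) = \langle R(\xi)\eta(\xi), e_k\rangle$. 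A Fubini interchange (justified because $\sum_k |\al_n^{(k)}|^2 < \infty$ and $\sum_k |\phi_k(\xi)|^2 \leq C_\x^+ \|\eta(\xi)\|^2$ by Theorem \ref{teorema formulas}) then yields, for the $n$-th Fourier coefficient $J_n(x)$ of $T_\x^+ x$,
\[
J_n(x) = \frac{1}{2\pi i}\int_{\gamma_\x^+} \frac{\langle R(\xi)\eta(\xi),\, \tilde{u}_n\rangle}{\lambda_n - \xi}\, d\xi.
\]

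Cauchy--Schwarz applied to the integral, combined with $|\langle R(\xi)\eta(\xi),\tilde{u}_n\rangle|^2 \leq C_\x^+ \|\eta(\xi)\|^2 \|\tilde{u}_n\|^2$ and the curve estimate above, yields
\[
\sum_n |J_n(x)|^2 \leq C_1 \int_{\gamma_\x^+} \|\eta(\xi)\|^2\, |d\xi|,
\]
where the summability $\sum_n \sum_k \frac{|\al_n^{(k)}|^2}{|\PR(\lambda_n)-\x|} < \infty$ absorbs the singular weights $\|\tilde{u}_n\|^2/|\PR(\lambda_n)-\x|$ coming from the $\ell_\x$-part of $\gamma_\x^+$. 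A second Cauchy--Schwarz applied to $F_m(\xi)$ gives $\|\eta(\xi)\|^2 \leq \|x\|^2 \sum_j \frac{\sum_m |\beta_j^{(m)}|^2}{|\lambda_j-\xi|^2}$, and integrating over $\gamma_\x^+$ together with the companion summability $\sum_j \sum_m \frac{|\beta_j^{(m)}|^2}{|\PR(\lambda_j)-\x|} < \infty$ produces $\int_{\gamma_\x^+} \|\eta(\xi)\|^2 |d\xi| \leq C_2 \|x\|^2$. Combining these bounds delivers $\|T_\x^+ x\| \leq C\|x\|$ on the finitely supported vectors, and a density argument extends the bound to all of $H$.

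The main obstacle throughout is the rigorous justification of the Fubini-type interchanges, since for generic $x \in H$ the inner series $\sum_j \frac{x_j}{\lambda_j-\xi} B_k(\xi,j)$ need not converge absolutely for $\xi \in \ell_\x$ (one lacks control of $\sum_j |x_j|^2/|\PR(\lambda_j)-\x|$ for arbitrary $x$). Restricting first to finitely supported $x$, where every series is finite and every swap is elementary, sidesteps this pathology; the uniform operator-theoretic bounds from Theorem \ref{teorema formulas} and the continuity furnished by Proposition \ref{continuidad} then propagate the estimate cleanly, and density yields the full result.
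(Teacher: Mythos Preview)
Your proof is correct and follows essentially the same route as the paper's: both isolate the second summand, apply Cauchy--Schwarz to factor out the integral $\int_{\gamma_\x^+}\frac{|d\xi|}{|\lambda_n-\xi|^2}$ (bounded by $C/|\PR(\lambda_n)-\x|$), invoke the uniform bound $\|(I+Y(\xi)X(\xi))^{-1}\|^2\leq C_\x^+$ from Theorem~\ref{teorema formulas}, and close the argument with the two $\Delta(T)$-summability conditions. Your repackaging via the vectors $\tilde v_j$, $\tilde u_n$, $\eta(\xi)$ and the identity $J_n(x)=\frac{1}{2\pi i}\int_{\gamma_\x^+}\frac{\langle R(\xi)\eta(\xi),\tilde u_n\rangle}{\lambda_n-\xi}\,d\xi$ is a clean way to organize the same estimates.

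The one point where you diverge from the paper is the density step. The paper works directly with arbitrary $x\in H$, taking absolute values from the outset (its display \eqref{acotacion J}), so the formula \eqref{definicion idempotentes} is shown to converge absolutely for every $x$ and no extension is needed. Your route establishes the norm bound only on finitely supported $x$ and then extends by continuity; this yields a bounded operator, but strictly speaking you must still check that the continuous extension agrees with the formula \eqref{definicion idempotentes} for general $x$. That verification is routine (the absolute-value estimates you already have for the $\alpha$- and $\beta$-sums, together with dominated convergence in the $j$-sum, do it), but it deserves a sentence rather than being folded into ``a density argument''.
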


As we mentioned in Section \ref{seccion 3},  for the sake of simplicity, we will deal with the curves $\gamma_\x^+$ and the analogous results will also hold when the curves $\gamma_\x^-$ are considered.

\begin{proof}
It is obvious that the map $x=\sumn x_ne_n\in H \mapsto \sum_{n\in N_{F_\x^+}}x_ne_n$ is bounded, so our task is dealing with the second summand of \eqref{definicion idempotentes}.
Namely, the goal is finding a bound for

\begin{equation}\label{acotacion J1}
\sumn \left| \sum_{k=1}^\infty \left( \int_{\gamma_\x^+} \frac{ \sumj \frac{x_j}{\lambda_j-\xi}\left(\summ \overline{\beta_j^{(m)}}a_{k,m}(\xi)\right)}{\lambda_n-\xi}d\xi    \right)\al_n^{(k)}\right|^2
\end{equation}
which is, clearly, less than or equal to
\begin{equation}\label{acotacion J}
\sumn \left(\sum_{k=1}^\infty  \sumj |x_j| \int_{\gamma_\x^+} \frac{ \left|\summ \overline{\beta_j^{(m)}}a_{k,m}(\xi)\right|}{|\lambda_j-\xi||\lambda_n-\xi|}|d\xi|     \left|\al_n^{(k)}\right|\right)^2.
\end{equation}
				
Now, we apply the Cauchy-Schwarz inequality twice, first under the integral sign and then to series indexed in $k\geq 1$, having that \eqref{acotacion J} is less than or equal to

\begin{equation*}
\begin{split}
& \sumn \left(\sum_{k=1}^\infty  \sumj |x_j| \left(  \int_{\gamma_\x^+} \frac{ \left|\summ \overline{\beta_j^{(m)}}a_{k,m}(\xi)\right|^2}{|\lambda_j-\xi|^2} |d\xi| \right)^{1/2}\left( \int_{\gamma_\x^+}\frac{|d\xi|}{|\lambda_n-\xi|^2}\right)^{1/2}  \left|\al_n^{(k)}\right|   \right )^2  \leq \\
& \sumn \left( \sumk \left| \sumj |x_j| \left(  \int_{\gamma_\x^+} \frac{ \left|\summ \overline{\beta_j^{(m)}}a_{k,m}(\xi)\right|^2}{|\lambda_j-\xi|^2} |d\xi| \right)^{1/2}\right|^2  \right) \left( \sumk \int_{\gamma_\x^+} \frac{|d\xi|}{|\lambda_n-\xi|^2}\left|\al_n^{(k)}\right|^2\right) 	
\end{split}
\end{equation*}
which is equal to
\begin{equation}\label{acotacion JJ}
\underbrace{\left( \sumk \left| \sumj |x_j| \left(  \int_{\gamma_\x^+} \frac{ \left|\summ \overline{\beta_j^{(m)}}a_{k,m}(\xi)\right|^2}{|\lambda_j-\xi|^2} |d\xi| \right)^{1/2}\right|^2  \right)}_\text{(I)} \cdot  \underbrace{\left(\sumn \sumk \int_{\gamma_\x^+} \frac{|d\xi|}{|\lambda_n-\xi|^2}\left|\al_n^{(k)}\right|^2\right)}_\text{(II)}.
\end{equation}

First, we focus our attention on (II) in \eqref{acotacion JJ}. As it is shown in the proof of \cite[Theorem 2.8]{GG3}, there exists a constant $C>0$ such that
		\begin{equation}\label{integral}
			\int_{\gamma_\x^+} \frac{|d\xi|}{|\lambda_n-\xi|^2} \leq \frac{C}{|\PR(\lambda_n)-\x|}.
		\end{equation}
Thus,
$$\text{(II)}=\sumn \sumk \int_{\gamma_\x^+} \frac{|d\xi|}{|\lambda_n-\xi|^2}\left|\al_n^{(k)}\right|^2 \leq C \sumn \sumk \frac{|\al_n^{(k)}|^2}{|\PR(\lambda_n)-\x|},$$ which converges since $\x \in \Delta(T).$

Now, let us bound the factor (I) in \eqref{acotacion JJ}. Upon applying the Cauchy-Schwarz inequality to the series indexed in $j\geq 1$, we obtain
\begin{eqnarray}\label{acotacion JJJ}
\underbrace{\sumk \left| \sumj |x_j| \left(  \int_{\gamma_\x^+} \frac{ \left|\summ \overline{\beta_j^{(m)}}a_{k,m}(\xi)\right|^2}{|\lambda_j-\xi|^2} |d\xi| \right)^{1/2}\right|^2}_\text{(I)}
& \displaystyle \leq  \norm{x}^2 \sumk \sumj \int_{\gamma_\x^+} \frac{ \left|\summ \overline{\beta_j^{(m)}}a_{k,m}(\xi)\right|^2}{|\lambda_j-\xi|^2} |d\xi| \nonumber \\
& \displaystyle =     \norm{x}^2  \sumj \int_{\gamma_\x^+} \frac{ \sumk \left|\summ \overline{\beta_j^{(m)}}a_{k,m}(\xi)\right|^2}{|\lambda_j-\xi|^2} |d\xi|,
\end{eqnarray}
where last equality follows as consequence of the Monotone Convergence Theorem.
		
At this point, we note that for each $j\geq 1,$ the vector $\summ \overline{\beta_j^{(m)}}e_m \in H$, so by \eqref{acotacion condicional} in Theorem \ref{teorema formulas}, we deduce that
$$ \sumk \left|\summ \overline{\beta_j^{(m)}}a_{k,m}(\xi)\right|^2 \leq C_\x^+ \summ |\beta_j^{(m)}|^2$$
for every $\xi\in \gamma_\x^+.$ This along with \eqref{integral} yields that
$$	\sumj \int_{\gamma_\x^+} \frac{ \sumk \left|\summ \overline{\beta_j^{(m)}}a_{k,m}(\xi)\right|^2}{|\lambda_j-\xi|^2} |d\xi| \leq C\cdot C_\x^+ \sumj \summ \frac{|\beta_j^{(m)|^2}|}{|\PR(\lambda_j)-\x|},
$$
which is finite, since $\x\in \Delta(T)$.

Consequently,  there exists a positive constant $M>0$ given by
		\begin{equation}\label{acotacion norma}
			M:= C^2 C_\x^+\left(\sumj \summ \frac{|\beta_j^{(m)}|^2}{|\PR(\lambda_j)-\x|}\right)\left( \sumn \sumk \frac{|\al_n^{(k)}|^2}{|\PR(\lambda_n)-\x|}\right),
		\end{equation}
such that
$$\norm{J_\x^+ x} \leq \norm{\sum_{n\in N_{F_\x^+}} x_ne_n} + M^{1/2}\norm{x} \leq (1+M^{1/2})\norm{x}, $$
for $x\in H$, which proves the theorem.
	\end{proof}

The next step consists of applying Theorem \ref{caracterizacion subespacios espectrales} to show that the ranges of $J_\x^{\pm}$ are contained in the spectral subspaces associated to the closed sets $F_\x^{\pm}$ for every $\x$ in the decomposability set of $T$. We state it for $J_\x^+$:

\smallskip

\begin{theorem}\label{proposicion rango}
Under the hypotheses of Theorem \ref{idempotentes acotados}, for each $\x \in \Delta(T)$
$$\ran(J_\x^+)\subseteq H_T(F_\x^+).$$
\end{theorem}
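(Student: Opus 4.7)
The plan is to apply Theorem \ref{caracterizacion subespacios espectrales} with $F = F_\x^+$. Fix $x = \sumn x_n e_n \in H$ and write $y = J_\x^+ x = \sumn y_n e_n$. Motivated by the contour-integral form of \eqref{definicion idempotentes}, the candidate local-resolvent coefficients I would introduce are
\begin{equation*}
g_k(z) := \frac{1}{2\pi i}\int_{\gamma_\x^+} \frac{h_k(\xi)}{z-\xi}\,d\xi, \qquad z \in (F_\x^+)^c,
\end{equation*}
where
\begin{equation*}
h_k(\xi) := \sumj \frac{x_j}{\lambda_j-\xi}\Bigl(\summ \overline{\beta_j^{(m)}}\, a_{k,m}(\xi)\Bigr), \qquad \xi \in \gamma_\x^+.
\end{equation*}
Since $\gamma_\x^+$ avoids $\Lambda$ and $\xi\mapsto(I+Y(\xi)X(\xi))^{-1}$ is norm-continuous on $\gamma_\x^+$ by Proposition \ref{continuidad}, the integrand is continuous in $\xi$, so $g_k$ is holomorphic on $\C\setminus\gamma_\x^+$ and in particular on $(F_\x^+)^c$.

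Condition (i) then holds by construction: for $n\in N_{(F_\x^+)^c}$ one reads $y_n = \sumk g_k(\lambda_n)\al_n^{(k)}$ directly from \eqref{definicion idempotentes}, after interchanging the sum over $k$ with the contour integral, a step justified by the uniform bounds along $\gamma_\x^+$ coming from Proposition \ref{continuidad} and Theorem \ref{teorema formulas}. Condition (iv) reduces, upon rewriting $\sumk g_k(z)\al_n^{(k)}$ as a single contour integral of $\sumk h_k(\xi)\al_n^{(k)}/(z-\xi)$, to the uniform estimate \eqref{acotacion condicional} of Theorem \ref{teorema formulas} applied to the fixed vector $\summ \overline{\beta_j^{(m)}} e_m$, combined with the summability built into $\x\in\Delta(T)$.

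For condition (ii), using (i) one has, for $n\in N_{(F_\x^+)^c}$,
\begin{equation*}
\sumk \Gamma(g_k)(z,\lambda_n)\al_n^{(k)} = \frac{\sumk g_k(z)\al_n^{(k)} - y_n}{z-\lambda_n},
\end{equation*}
the apparent singularity at $z = \lambda_n$ being removable. The resulting $H$-valued map can be displayed as a contour integral over $\gamma_\x^+$ of an integrand jointly continuous in $(z,\xi)$ and holomorphic in $z\in (F_\x^+)^c$; a Morera-plus-dominated-convergence argument, using the norm estimates already invoked in the proof of Theorem \ref{idempotentes acotados}, then yields analyticity.

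The main obstacle is condition (iii). The plan is to apply the inversion formula \eqref{alien cofactor} of Theorem \ref{teorema formulas} pointwise in $\xi\in\gamma_\x^+$, with input vector the one whose Fourier coefficients are $x_j/(\lambda_j-\xi)$: this produces, for every $n\in\N$, an identity of the form
\begin{equation*}
\sumk \Bigl(\sumj \frac{x_j}{\lambda_j-\xi}\,a_{k,j}(\xi)\Bigr)\bigl(\delta_{k,n}+f_T^{(k,n)}(\xi)\bigr) = \frac{x_n}{\lambda_n-\xi}.
\end{equation*}
Multiplying by $\overline{\beta_n^{(k)}}/(\lambda_n-z)$ and using the partial-fraction decomposition
\begin{equation*}
\frac{1}{(\lambda_n-z)(\lambda_n-\xi)} = \frac{1}{z-\xi}\Bigl(\frac{1}{\lambda_n-z}-\frac{1}{\lambda_n-\xi}\Bigr),
\end{equation*}
summing in $n$ and integrating along $\gamma_\x^+$, the contributions from $n\in N_{(F_\x^+)^c}$ produce exactly the $\Gamma(g_m)$-term in (iii) via Cauchy's theorem, while the contributions from $n\in N_{F_\x^+}$ rearrange into the remaining pieces involving the restricted Borel-like sums $\sum_{n\in N_{F_\x^+}}\al_n^{(m)}\overline{\beta_n^{(k)}}/(\lambda_n-z)$; the $+1$ on the right-hand side of (iii) will emerge from the $\delta_{k,n}$ contribution, matching the Cauchy integral of a constant. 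The bookkeeping of these double sums, and the justification of every interchange by the summability from $\x\in\Delta(T)$, the compactness of $\gamma_\x^+$, and the uniform continuity of $(I+Y(\xi)X(\xi))^{-1}$, is the technical core of the argument.
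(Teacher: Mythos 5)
Your overall architecture is the paper's: you define the same functions $g_k$ by integrating the kernel of \eqref{definicion idempotentes} against $1/(z-\xi)$ and then verify conditions (i)--(iv) of Theorem \ref{caracterizacion subespacios espectrales}. But your treatment of the crucial condition (iii) has a genuine gap. You propose to apply the inversion formula \eqref{alien cofactor} of Theorem \ref{teorema formulas} ``with input vector the one whose Fourier coefficients are $x_j/(\lambda_j-\xi)$''. First, for $\xi$ on the vertical segment $\ell_\x$ this is in general not a vector of $H$: membership of $\x$ in $\Delta(T)$ only controls sums weighted by $1/|\PR(\lambda_j)-\x|$, not by $1/|\lambda_j-\xi|^2$, and $\Lambda$ may accumulate on $\ell_\x$, so $\sumj |x_j|^2/|\lambda_j-\xi|^2$ can diverge; Theorem \ref{teorema formulas} is stated for $x\in H$ and cannot be invoked for this object. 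Second, even formally, the identity you would obtain involves $\sumj \frac{x_j}{\lambda_j-\xi}\,a_{k,j}(\xi)$, in which the second index of $a_{k,\cdot}$ is paired with the coefficients of $x$; this is \emph{not} the kernel $\sumj \frac{x_j}{\lambda_j-\xi}\bigl(\summ \overline{\beta_j^{(m)}}a_{k,m}(\xi)\bigr)$ that defines $J_\x^+$ and your $g_k$, where $a_{k,\cdot}$ is paired with the $\beta$'s. Consequently the subsequent partial-fraction and contour manipulation does not connect to condition (iii) for these $g_k$. The step that actually works (and is what the paper does) is to write each $\Gamma(g_\ell)(z,\lambda_n)$ as a contour integral, interchange the sums in $n,\ell,j$ with the integral (legitimate because $\int_{\gamma_\x^+}|d\xi|/|\lambda_j-\xi|^2 \leq C/|\PR(\lambda_j)-\x|$, i.e. \eqref{integral}, together with $\x\in\Delta(T)$ and \eqref{acotacion condicional}), recognize the full Borel series $f_T^{(\ell,k)}(\xi)$, and then apply \eqref{alien cofactor} to the genuine $H$-vectors $\summ \overline{\beta_j^{(m)}}e_m$, one for each $j$, which collapses $\suml\bigl(\summ \overline{\beta_j^{(m)}}a_{\ell,m}(\xi)\bigr)\bigl(\delta_{\ell,k}+f_T^{(\ell,k)}(\xi)\bigr)$ to $\overline{\beta_j^{(k)}}$ and reduces (iii) to a Cauchy-integral identity distinguishing $\lambda_j$ inside or outside $\gamma_\x^+$.

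A secondary, related weakness: you justify holomorphy of $g_k$ by saying the integrand $h_k(\xi)$ is continuous in $\xi$ because of Proposition \ref{continuidad}. Pointwise in $\xi\in\ell_\x$ the series defining $h_k(\xi)$ need not converge absolutely (same second-power obstruction as above), so continuity of $h_k$ is not established and well-definedness of $g_k$ cannot be read off that way. The correct route is to integrate in $\xi$ before summing in $j$: one shows $\sumj\int_{\gamma_\x^+} |x_j|\,\bigl|\summ \overline{\beta_j^{(m)}}a_{k,m}(\xi)\bigr|\,|d\xi|/(|\lambda_j-\xi||z-\xi|)<\infty$ using Cauchy--Schwarz, \eqref{acotacion condicional} and \eqref{integral}, which both defines $g_k$ and lets you write it as a series of integrals, each holomorphic in $z$ by continuity of $\xi\mapsto (I+Y(\xi)X(\xi))^{-1}$; uniform convergence on compact subsets of $\C\setminus\gamma_\x^+$ then gives holomorphy. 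Your verifications of (i), (ii) and (iv) are in the right spirit, but they, too, should be run through this ``integrate first, then sum'' justification rather than pointwise continuity of the kernel.
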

	
\smallskip

\begin{proof}
Let $y=\sumn y_n e_n \in H$ be fixed and denote $x= J_\x^+ y$. We will prove that $x \in H_T(F_\x^+)$ applying Theorem \ref{caracterizacion subespacios espectrales}: note that $T$ enjoys the SVEP (because $\sigma_p(T)$ is at most countable) and the series $\sumk (\norm{u_k}^2+\norm{v_k}^2)$ is finite (as a consequence of \eqref{condicion}). Accordingly, our task is defining  holomorphic functions $g_k:\mathbb{C}\setminus F_\x^+ \rightarrow \C$ for $k\geq 1$ satisfying conditions $(i)-(iv)$ of the aforementioned theorem.

\medskip

For $k\geq 1$ let
$$g_k(z) = \frac{1}{2\pi i} \int_{\gamma_\x^+} \frac{ \sumj \frac{y_j}{\lambda_j-\xi}\left(\summ \overline{\beta_j^{(m)}}a_{k,m}(\xi)\right)}{z-\xi} d\xi \qquad (z\in \C\setminus \gamma_\x^+).$$ We will show that $g_k$ is a well-defined and holomorphic function in $\mathbb{C}\setminus F_\x^+$.

Fix $K\subset \C\setminus \gamma_\x^+$ a compact set and denote $d = \textnormal{dist}(K, \gamma_\x^+) > 0.$ Observe that, for every $z \in K,$ Cauchy-Schwarz inequality applied twice yields

\begin{eqnarray}\label{integral bien definida}
\sumj\int_{\gamma_\x^+} \frac{|y_j|\left|\summ \overline{\beta_j^{(m)}}a_{k,m}(\xi)\right|}{|\lambda_j-\xi||z-\xi|}|d\xi|
&\leq & \displaystyle  \frac{\norm{y}}{d} \left( \sumj \left( \int_{\gamma_\x^+} \frac{\left|\summ \overline{\beta_j^{(m)}}a_{k,m}(\xi)\right|}{ |\lambda_j-\xi|} |d\xi|     \right)^2 \right)^{1/2}  \nonumber \\
&\leq & \displaystyle  \frac{\norm{y}}{d} \; \ell\cdot \left( \sumj \int_{\gamma_\x^+} \frac{\left|\summ \overline{\beta_j^{(m)}}a_{k,m}(\xi)\right|^2}{ |\lambda_j-\xi|^2} \right)^{1/2}
\nonumber	 \\
&\leq & \displaystyle \frac{\norm{y}}{d} \; \ell \cdot(C_\x^+)^{1/2}  \underbrace{\left(\sumj \summ |\beta_j^{(m)}|^2 \int_{\gamma_\x^+}\frac{|d\xi|}{|\lambda_j-\xi|^2}     \right)^{1/2}}_\text{(III)},
\end{eqnarray}
where $\ell := \textnormal{length}(\gamma_\x^+)^{1/2}$ and the bound  \eqref{acotacion condicional} of Theorem \ref{teorema formulas} has been applied in the last inequality.


Finally, \eqref{integral} yields that (III) is finite since $\x\in \Delta (T)$, and hence the series in \eqref{integral bien definida}
$$\sumj\int_{\gamma_\x^+} \frac{|y_j|\left|\summ \overline{\beta_j^{(m)}}a_{k,m}(\xi)\right|}{|\lambda_j-\xi||z-\xi|}|d\xi|$$
converges uniformly in $K$. As a consequence, we have that
\begin{equation}\label{igualdad de series}
g_k(z) = \frac{1}{2\pi i} \int_{\gamma_\x^+} \frac{ \sumj \frac{y_j}{\lambda_j-\xi}\left(\summ \overline{\beta_j^{(m)}}a_{k,m}(\xi)\right)}{z-\xi} d\xi= \sumj \frac{y_j}{2\pi i} \int_{\gamma_\x^+} \frac{ \summ \overline{\beta_j^{(m)}}a_{k,m}(\xi)}{(\lambda_j-\xi)(z-\xi)}        d\xi
\end{equation}
for  $z\in \C\setminus \gamma_\x^+$.

Since the series in the right hand side of \eqref{igualdad de series} converges uniformly on compact subsets of $\C\setminus \gamma_\x^+$, in order to prove that $g_k$ is holomorphic it would be enough to show that for each $j \in \N$ the map
\begin{equation}\label{integrales holomorfas}
z\in \C\setminus \gamma_\x^+ \mapsto \int_{\gamma_\x^+} \frac{ \summ \overline{\beta_j^{(m)}}a_{k,m}(\xi)}{(\lambda_j-\xi)(z-\xi)}d\xi
\end{equation}
 is holomorphic.

\noindent For such a task, fix $j\in \N$ and define $\beta_j := \summ \overline{\beta_j^{(m)}}e_m \in H.$ Observe that given $\xi \in \gamma_\x^+,$
$$ \summ \overline{\beta_j^{(m)}}a_{k,m}(\xi) = \pe{(I+Y(\xi)X(\xi))^{-1}\beta_j,e_k},$$
which along with Proposition \ref{continuidad} yield that the map
$$\xi \in \gamma_\x^+ \mapsto \frac{ \summ \overline{\beta_j^{(m)}}a_{k,m}(\xi)}{(\lambda_j-\xi)(z-\xi)}$$
is continuous for every $z\in \C\setminus \gamma_\x^+$.

Finally, for $\xi \in \gamma_\x^+$ the map
$$z\in \C\setminus \gamma_\x^+\mapsto \frac{1}{z-\xi}$$
is holomorphic, so for each $j \in \N$ the map \eqref{integrales holomorfas} is holomorphic and, hence, $g_k$ is holomorphic as well.

\medskip

Now, let us show that for each $k\geq 1$ the function $g_k$ meets the conditions $(i)-(iv)$ of Theorem \ref{caracterizacion subespacios espectrales}. To prove $(i)$, let $n\in N_{(F_\x^+)^c}$ and note that
$$\pe{x,e_n} = \pe{J_\x^+ y,e_n} = \sumk g_k(\lambda_n)\al_n^{(k)}.$$
Thus, condition $(i)$ is fulfilled.

\medskip

In order to show $(ii)$, observe that for every $z\in \C\setminus\gamma_\x^+, n\in \N$ and $z\neq \lambda_n$, we have
 $$\Gamma(g_k)(z,\lambda_n) = \frac{-1}{2\pi i} \int_{\gamma_\x^+} \frac{ \sumj \frac{y_j}{\lambda_j-\xi}\left(\summ \overline{\beta_j^{(m)}}a_{k,m}(\xi)\right)}{(z-\xi)(\lambda_n-\xi)}d\xi.$$

If  $h=\sumn h_ne_n \in H$,  we are required to prove that the map
 \begin{equation}\label{serie condicion (ii)}
z\in \C\setminus \gamma_\x^+ \mapsto \sum_{n \in N_{(F_\x^+)^c}}\sumk \left (\frac{-1}{2\pi i} \int_{\gamma_\x^+} \frac{ \sumj \frac{y_j}{\lambda_j-\xi}\left(\summ \overline{\beta_j^{(m)}}a_{k,m}(\xi)\right)}{(z-\xi)(\lambda_n-\xi)}d\xi \right )\,  \al_n^{(k)}\; \overline{h_n}
\end{equation}
is holomorphic. Since the map $z\in \C\setminus \gamma_\x^+ \mapsto \Gamma(g_k)(z,\lambda_n)\al_n^{(k)}$ is holomorphic for every $n\in N_{(F_\x^+)^c}$ and $k\in\N$, it suffices to show that the double series in \eqref{serie condicion (ii)} converges uniformly on compact subsets of $\C\setminus \gamma_\x^+.$

Fix $K\subset \C\setminus \gamma_\x^+ $ a compact subset and denote $d=\textnormal{dist}(K, \gamma_\x^+).$
Note that
\begin{equation}\label{convergencia serie triple}
\begin{split}
 & \sum_{n \in N_{(F_\x^+)^c}}\sumk \frac{1}{2\pi } \left|\int_{\gamma_\x^+} \frac{ \sumj \frac{y_j}{\lambda_j-\xi}\left(\summ \overline{\beta_j^{(m)}}a_{k,m}(\xi)\right)}{(z-\xi)(\lambda_n-\xi)}d\xi\al_n^{(k)}\overline{h_n}\right|
 \\& \leq \frac{1}{2\pi} \sumn \sumk \sumj |y_j| \int_{\gamma_\x^+} \frac{\left|\summ \overline{\beta_j^{(m)}}a_{k,m}(\xi)\right|}{|\lambda_j-\xi||z-\xi||\lambda_n-\xi|}|d\xi| |\al_n^{(k)}||h_n|
 \\ & \leq \frac{\norm{h}}{2\pi d} \underbrace{\left( \sumn \left| \sumk \sumj |y_j| \int_{\gamma_\x^+} \frac{ \left|\summ \overline{\beta_j^{(m)}}a_{k,m}(\xi)\right| }{|\lambda_j-\xi||\lambda_n-\xi|}|d\xi| |\al_n^{(k)}|\right|^2\right)^{1/2}}_\text{(IV)},
\end{split}
  \end{equation}
where last inequality follows upon applying Cauchy-Schwarz inequality. But, observe that (IV) looks like \eqref{acotacion J} (where $y_j$ plays the role of $x_j$), and \eqref{acotacion J}  was already  shown to be finite. Consequently,  $(ii)$ follows as well.

\medskip

Now, let us prove condition $(iii)$. Having into account that $g_k$ is holomorphic in $\C\setminus \gamma_\x^+,$  our task is proving that
\begin{equation}\label{igualdad condicion (iii)}
\begin{split}
\sum_{n \in N_{F_\x^+}} \frac{y_n\overline{\beta_n^{(k)}}}{\lambda_n-z} & = g_k(z) - \sumn \suml \Gamma(g_{\ell})(z,\lambda_n)\al_n^{(\ell)}\overline{\beta_n^{(k)}},
\end{split}
\end{equation}
for each $z \in (F_\x^+)^c$ and $k\in \N$.

Now,
\begin{equation}\label{condicion iii--}
\sumn \suml \Gamma(g_{\ell})(z,\lambda_n)\al_n^{(\ell)}\overline{\beta_n^{(k)}} = \underbrace{\sumn \suml \frac{-1}{2\pi i} \int_{\gamma_\x^+} \frac{ \sumj \frac{y_j}{\lambda_j-\xi}\left(\summ \overline{\beta_j^{(m)}}a_{\ell,m}(\xi)\right)}{(z-\xi)(\lambda_n-\xi)}d\xi\al_n^{(\ell)}\overline{\beta_n^{(k)}}}_\text{(V)},
\end{equation}
for $z \in (F_\x^+)^c$ and $k\in \N$. But the series
$$\sumn \suml \sumj |y_j| \int_{\gamma_\x^+} \frac{\left|\summ \overline{\beta_j^{(m)}}a_{\ell,m}(\xi)\right|}{|\lambda_j-\xi||z-\xi||\lambda_n-\xi|}|d\xi||\al_n^{(\ell)}||\beta_n^{(k)}|$$
converges (analogously as \eqref{acotacion J}), so we can change the order of summation of $n,\ell,j$ and the integral in (V) (note that we are not changing the series indexed in $m$) and  \eqref{condicion iii--} turns out to be:

\begin{equation*}
\begin{split}
\sumn \suml \Gamma(g_{\ell})(z,\lambda_n)\al_n^{(\ell)}\overline{\beta_n^{(k)}} & =   \frac{-1}{2\pi i} \int_{\gamma_\x^+} \frac{ \sumj \frac{y_j}{\lambda_j-\xi}\suml\left(\summ \overline{\beta_j^{(m)}}a_{\ell,m}(\xi)\right)\sumn  \frac{\al_n^{(\ell)}\overline{\beta_n^{(k)}}}{\lambda_n-\xi}  }{(z-\xi)}d\xi
\\ & = \frac{-1}{2\pi i} \int_{\gamma_\x^+} \frac{ \sumj \frac{y_j}{\lambda_j-\xi}\suml\left(\summ \overline{\beta_j^{(m)}}a_{\ell,m}(\xi)f_T^{(\ell,k)}(\xi)\right) }{(z-\xi)}d\xi,
\end{split}
\end{equation*}
for each $z \in (F_\x^+)^c$ and $k\in \N$.

Hence, equality \eqref{igualdad condicion (iii)} turns into
\begin{eqnarray} \label{igualdad condicion (iii) nueva--}
\displaystyle \sum_{n \in N_{F_\x^+}} \frac{y_n\overline{\beta_n^{(k)}}}{\lambda_n-z} & = & \displaystyle \frac{1}{2\pi i} \int_{\gamma_\x^+} \frac{ \sumj \frac{y_j}{\lambda_j-\xi}\left(\summ \overline{\beta_j^{(m)}}a_{k,m}(\xi)\right)}{z-\xi}d\xi + \nonumber \\
&& \displaystyle  + \,  \frac{1}{2\pi i} \int_{\gamma_\x^+} \frac{ \sumj \frac{y_j}{\lambda_j-\xi}\suml\left(\summ \overline{\beta_j^{(m)}}a_{\ell,m}(\xi)f_T^{(\ell,k)}(\xi)\right) }{(z-\xi)}d\xi \nonumber \\
& = & \displaystyle \frac{1}{2\pi i} \int_{\gamma_\x^+} \frac{ \sumj \frac{y_j}{\lambda_j-\xi}\suml\left(\summ \overline{\beta_j^{(m)}}a_{\ell,m}(\xi)(\delta_{\ell,k}
+ f_T^{(\ell,k)}(\xi))\right) }{(z-\xi)} d\xi \nonumber \\
& = & \displaystyle \frac{1}{2\pi i} \int_{\gamma_\x^+} \frac{\sumj \frac{y_j}{\lambda_j-\xi}\overline{\beta_j^{(k)}}}{z-\xi}d\xi,
\end{eqnarray}
where the last equality follows upon applying Theorem \ref{teorema formulas}, identity \eqref{alien cofactor} with the vector $\summ \overline{\beta_j^{(m)}}e_m \in H$ for each $j\in \N.$

So, proving identity \eqref{igualdad condicion (iii)} is equivalent to prove the identity:
\begin{equation} \label{igualdad condicion (iii) nueva}
\sum_{n \in N_{F_\x^+}} \frac{y_n\overline{\beta_n^{(k)}}}{\lambda_n-z}=  \frac{1}{2\pi i} \int_{\gamma_\x^+} \frac{\sumj \frac{y_j}{\lambda_j-\xi}\overline{\beta_j^{(k)}}}{z-\xi}d\xi,
\end{equation}
for each $z \in (F_\x^+)^c$ and $k\in \N$. At this point, we argue as in \cite[p. 24]{GG3} to check that such equality holds.


Observe that
\begin{equation}\label{ecuacion 23-03}
 \int_{\gamma_\x^+} \frac{\sumj \frac{y_j}{\lambda_j-\xi}\overline{\beta_j^{(k)}}}{z-\xi}d\xi = \sumj y_j\overline{\beta_j^{(k)}} \int_{\gamma_{\x^+}}
 \frac{d\xi}{(\lambda_j-\xi)(z-\xi)}
\end{equation}
for every $z \in (F_\x^+)^c$ since
\begin{equation*}
\begin{split}
\sumj \int_{\gamma_{\x}^+} \left| \frac{y_j\overline{\beta_j^{(k)}}}{(\lambda_j-\xi)(z-\xi)}\right||d \xi|      & =  \sumj |y_j\beta_j^{(k)}|
\int_{\gamma_{\x}^+}  \frac{|d \xi|}{|\lambda_j-\xi||z-\xi|} \\
& \leq \frac{\norm{y}}{\textnormal{dist}(z,\gamma_{\x}^+)} \left(\sumj \left( \int_{\gamma_{\x}^+}
\frac{|d \xi|}{|\lambda_j-\xi|} \right)^2 |\beta_j^{(k)}|^2\right)^{1/2},
\end{split}
\end{equation*}	
being the latter series convergent because $\x$ belongs to the decomposability set.

\medskip

Accordingly, proving \eqref{igualdad condicion (iii) nueva} turns into showing
$$ \sum_{n\in N_{F_\x^+}} \frac{y_n\overline{\beta_n^{(k)}}}{\lambda_n-z} = \sumj y_j\overline{\beta_j^{(k)}} \frac{1}{2\pi i}\int_{\gamma_{\x}^+}
\frac{d\xi}{(\lambda_j-\xi)(z-\xi)}$$ for each $z \in (F_\x^+)^c$ and $k\in \N$, which holds upon applying Cauchy's integral formula.
%

Thus, condition $(iii)$ is fulfilled.

It remains to show that condition $(iv)$ holds. That is, we have to prove that, for each $z\in (F_\x^+)^c$,
$$\sumn \left| \sumk g_k(z)\al_n^{(k)}\right|^2 <\infty.$$
So, let $z\in (F_\x^+)^c$. Having in mind  \eqref{igualdad de series}, it follows that
\begin{equation}\label{check condition (iv)}
	\sumn \left| \sumk g_k(z)\al_n^{(k)}\right|^2 = \sumn \left| \sumk   \frac{1}{2\pi i} \int_{\gamma_\x^+} \frac{ \sumj \frac{y_j}{\lambda_j-\xi}\left(\summ \overline{\beta_j^{(m)}}a_{k,m}(\xi)\right)}{z-\xi} d\xi \al^{(k)}_n \right|^2,
\end{equation}
which turns out to be equivalent to the series \eqref{acotacion J1} considered in the proof of Theorem \ref{idempotentes acotados}, where $z-\xi$ plays the role of $\lambda_n-\xi.$ Hence, arguing analogously as in the proof of Theorem \ref{idempotentes acotados} and taking into account that
$$\sumn \sumk \int_{\gamma_{\x^+}} \frac{|d\xi|}{|z-\xi|^2} |\al_n^{(k)}|^2 \leq  \textnormal{length}(\gamma_\x^+) \frac{1}{d^2} \sumn \sumk|\al_n^{(k)}|^2 <\infty,$$ with $d= \textnormal{dist}(z,\gamma_\x^+)>0,$ it follows that \eqref{check condition (iv)} holds.

\medskip

Finally, Theorem \ref{caracterizacion subespacios espectrales} yields that $x=J_\x^+ y \in H_T(F_\x^+)$, so $\ran(J_\x^+)\subset H_T(F_\x^+)$, as we wished to prove.
\end{proof}

\medskip

Now, the goal is proving that for almost every $\x$ in decomposability set  the operators $J_\x^+$ are spectral idempotents.
The proof is based on the following two lemmas:

\smallskip

\begin{lemma}\label{lema suma}
Under the hypotheses of Theorem \ref{idempotentes acotados}, for every $\x \in \Delta(T)$
$$J_\x^++J_\x^- =Id_H.$$
\end{lemma}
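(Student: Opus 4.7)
The plan is to split each operator $J_\x^\pm$ into its ``diagonal'' piece $\sum_{n \in N_{F_\x^\pm}} x_n e_n$ and its contour-integral piece, and to treat the two pieces separately. Since $\x \in \Delta(T)$ the segment $\ell_\x$ meets neither $\Lambda$ nor $\sigma_p(T)\cup\sigma_p(T^*)$; moreover the assumption $(\star)$ gives $\overline{\Lambda}\subset \D$ and $F_\x^+\cup F_\x^-=\overline{\D}$, so every $\lambda_n$ lies in exactly one of $F_\x^+$ or $F_\x^-$. Hence $\{N_{F_\x^+}, N_{F_\x^-}\}$ partitions $\N$ and the diagonal pieces already sum to $\sum_n x_n e_n=x$. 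It remains to show that the two contour integrals cancel one another.

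The cancellation on the shared segment is purely geometric. Since $\gamma_\x^+$ and $\gamma_\x^-$ are positively oriented around $F_\x^+$ and $F_\x^-$ respectively, the segment $\ell_\x$ is traversed downward on $\gamma_\x^+$ and upward on $\gamma_\x^-$, while the arcs $A_\x^+$ and $A_\x^-$ together sweep the unit circle $\T$ exactly once counterclockwise. Adding the two integrals in \eqref{definicion idempotentes} therefore cancels the $\ell_\x$-contributions and reduces the problem to showing that the remaining integral, now taken over $\T$, vanishes.

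To dispose of the $\T$-integral I would appeal to analytic continuation combined with a Cauchy-type deformation to infinity. As computed in the proof of Proposition \ref{continuidad}, the operator $Y(\xi)X(\xi)$ has matrix $[f_T^{(k,n)}(\xi)]$ and is therefore analytic in $\xi\in \C\setminus\overline{\Lambda}$; the quasisimilarity argument used in Theorem \ref{proposicion invertibles} extends to show that $I+Y(\xi)X(\xi)$ is invertible whenever $\overline{\xi}\notin\sigma_p(T^*)$. Since $(\star)$ places both $\overline{\Lambda}$ and $\sigma(T)$ inside the open disc $\D$, each coefficient $a_{k,m}(\xi)$ extends analytically to an open neighbourhood of $\{|\xi|\geq 1\}$, and $(I+Y(\xi)X(\xi))^{-1}\to I$ as $|\xi|\to\infty$, so $a_{k,m}(\xi)=\delta_{k,m}+O(1/|\xi|)$. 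Combined with the factors $(\lambda_j-\xi)^{-1}$ and $(\lambda_n-\xi)^{-1}$ the scalar integrand is $O(1/|\xi|^2)$ at infinity, so Cauchy's theorem allows one to replace the integral over $\T$ by the integral over a circle $|\xi|=R$, whose value tends to zero as $R\to\infty$.

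The main technical obstacle is justifying the interchange of the multiple series over $n,k,j,m$ with the contour integral and, simultaneously, with the limit $R\to\infty$ in the deformation. To handle this I would work one Fourier coefficient at a time, collapsing the inner double sum $\sumk \al_n^{(k)}\summ \overline{\beta_j^{(m)}}a_{k,m}(\xi)$ into the single scalar pairing $\langle (I+Y(\xi)X(\xi))^{-1}\tilde{\beta}_j,\tilde{\al}_n\rangle$, where $\tilde{\al}_n=\sumk \overline{\al_n^{(k)}}e_k$ and $\tilde{\beta}_j=\summ \overline{\beta_j^{(m)}}e_m$ lie in $H$ thanks to \eqref{consecuencia sumabilidad 1}. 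The uniform bound \eqref{acotacion condicional} of Theorem \ref{teorema formulas}, together with the estimate \eqref{integral} and the summability built into $\Delta(T)$, then produces a majorant dominating the integrand uniformly in $R$ on every annulus $1\leq|\xi|\leq R$, so that Fubini--Tonelli and dominated convergence legitimize both the contour deformation and the interchange of integration with summation, concluding the proof.
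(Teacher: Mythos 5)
Your reduction of the problem to the vanishing of an integral over $\T$ --- the two traversals of $\ell_\x$ cancelling by orientation, the arcs $A_\x^{\pm}$ combining to $\T$, and the diagonal parts summing to the identity because $\ell_\x$ meets neither $\Lambda$ nor the point spectra --- is exactly the paper's first step (the paper checks the identity on each basis vector $e_N$, which is equivalent). Where you genuinely diverge is in how the $\T$-integral is killed. The paper promotes the integrand to the operator $B(z)=\sum_{i,j}a_{i,j}(z)\,(D_\Lambda-zI)^{-1}u_i\otimes(D_\Lambda^*-\overline{z}I)^{-1}v_j$, verifies via the factorization $T-zI=(D_\Lambda-zI)^{1/2}(I+X(z)Y(z))(D_\Lambda-zI)^{1/2}$ and the identity $(I+XY)^{-1}=I-X(I+YX)^{-1}Y$ that $R(z)=(D_\Lambda-zI)^{-1}-B(z)$ inverts $T-zI$ on $\T$, and then obtains $\int_\T B(\xi)\,d\xi=0$ from the two Riesz--Dunford identities $\tfrac{-1}{2\pi i}\int_\T R(\xi)\,d\xi=Id_H=\tfrac{-1}{2\pi i}\int_\T(D_\Lambda-\xi I)^{-1}d\xi$. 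You instead push the contour to infinity; this is legitimate, since for $|\xi|\ge 1$ the same factorization shows $I+X(\xi)Y(\xi)$ (hence $I+Y(\xi)X(\xi)$) is invertible because $T-\xi I$ is, $Y(\xi)X(\xi)=\sum_{n,k}f_T^{(k,n)}(\xi)\,e_n\otimes e_k$ is operator-analytic off $\overline{\Lambda}$ with $\norm{Y(\xi)X(\xi)}=O(1/|\xi|)$, and the two factors $(\lambda_N-\xi)^{-1}(\lambda_n-\xi)^{-1}$ then give the $O(1/R^2)$ decay you need. The two arguments are morally the same --- the Dunford identity is itself proved by expanding the resolvent at infinity --- but yours works coefficient by coefficient and avoids constructing $R(z)$ explicitly, at the price of establishing analyticity, invertibility and decay of $(I+Y(\xi)X(\xi))^{-1}$ outside the disc, which the paper never needs. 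One simplification you could claim over your own last paragraph: once each scalar integral $\int_\T\langle(I+Y(\xi)X(\xi))^{-1}\tilde{\beta}_N,\tilde{\al}_n\rangle\,(\lambda_N-\xi)^{-1}(\lambda_n-\xi)^{-1}d\xi$ is shown to vanish, no Fubini over the outer index $n$ is required (a vector all of whose Fourier coefficients vanish is zero); the only interchange genuinely needed is that of $\sum_k$ with the integral, which Cauchy--Schwarz and the uniform bound \eqref{acotacion condicional} justify.
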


\smallskip

\begin{proof}
Let $\x\in \Delta(T)$ be fixed. In order to prove $J_\x^++J_\x^- =Id_H$, it is enough to show that
$$(J_\x^++J_\x^-)e_N = e_N$$
for every $N \in \N.$ Note that
\begin{equation}\label{ecuacion suma}
(J_\x^++J_\x^-)e_N = e_N + \sumn \sumk \frac{1}{2\pi i} \int_{\T} \frac{\frac{1}{\lambda_N-\xi} \summ \overline{\beta_N^{(m)}} a_{k,m}(\xi)}{\lambda_n-\xi} \al_n^{(k)}d\xi \, e_n,
 \end{equation}
 so let us show that the second summand in the right-hand side of \eqref{ecuacion suma} is zero. Arguing as in \eqref{acotacion J1} in the proof of Theorem \ref{idempotentes acotados},  we can change the series indexed in $n$ and $k$ with the integral and prove that
 \begin{equation}\label{integral 0}
\frac{1}{2\pi i}\int_{\T} \sumn \sumk \frac{\frac{1}{\lambda_N-\xi} \summ \overline{\beta_N^{(m)}} a_{k,m}(\xi)}{\lambda_n-\xi} \al_n^{(k)}\, e_n d\xi=0
 \end{equation}
for every $N\in \N$.
For this purpose, the strategy will be finding the inverse of the operator $T-z I$ for $z \in \T$ and applying an argument involving the Dunford Functional Calculus.

\medskip

For each $z \in \T$, let $B(z)$ be the operator
\begin{equation} \label{definicion B(z)}
B(z)=  \sum_{i=1}^\infty \sumj a_{i,j}(z)  (D_\Lambda -zI)^{-1}u_i\otimes (D_\Lambda^*-\overline{z}I)^{-1}v_j.
\end{equation}
Since $\overline{\Lambda} \subset \D,$ it follows that $(D_\Lambda-zI)^{-1/2}$ and $(D_\Lambda-zI)^{-1}$ are well defined bounded operators for every $z\in \T$ and moreover,   Proposition \ref{proposicion operadores X,Y} yields that the operator
$$L(z)= X(z)(I+Y(z)X(z))^{-1}Y(z)$$
is bounded. Note that
 \begin{equation}\label{definicion L}
 	L(z)= X(z)(I+Y(z)X(z))^{-1}Y(z) = \sumi \sumj a_{i,j}(z)(D_\Lambda-zI)^{-1/2}u_i\otimes (D_\Lambda^*-\overline{z}I)^{-1/2}v_j,
 \end{equation}
so $B(z) = (D_{\Lambda}-zI)^{-1/2}L(z)(D_{\Lambda}-zI)^{-1/2}$ and hence, a bounded operator.

\smallskip

Now, we claim that for each $z \in \T$
$$R(z)= (D_\Lambda-zI)^{-1} -B(z)$$
is the inverse of $T-zI$.  Note that for each $z \in \T$, the operator $R(z)$ is bounded since $\sigma(D_\Lambda)\subset \D$, so in order to prove the claim, it suffices to show that $R(z)$ is the right inverse of $(T-zI)$.

\smallskip

Let $z \in \T$ be fixed. Having in mind that $(T-zI) = (D_\Lambda-zI)^{1/2}(I+X(z)Y(z))(D_\Lambda-zI)^{1/2}$ and \eqref{definicion L}, we deduce
 \begin{equation*}
\begin{split}
(T-zI)R(z) &= (T-zI)( (D_\Lambda-zI)^{-1}-B(z))
\\ & = (T-zI)\left( (D_\Lambda-zI)^{-1}-(D_\Lambda-zI)^{-1/2}L(z)(D_\Lambda-zI)^{-1/2}\right)
\\ & = (T-zI)(D_\Lambda-zI)^{-1/2} \left(I-L(z)\right)(D-zI)^{-1/2}
\\ & = (D_\Lambda-zI)^{1/2}(I+X(z)Y(z))(D_\Lambda-zI)^{1/2}(D_\Lambda-zI)^{-1/2} \left(I-L(z)\right)(D_\Lambda-zI)^{-1/2}
\\ & = (D_\Lambda-zI)^{1/2}(I+X(z)Y(z))(I-L(z))(D_\Lambda-zI)^{-1/2}
\\ & =(D_\Lambda-zI)^{1/2}(I+X(z)Y(z))(I- X(z)(I+Y(z)X(z))^{-1}Y(z))(D_\Lambda-zI)^{-1/2}.
\end{split}
 \end{equation*}
At this point, it is enough to notice that $(I+X(z)Y(z))^{-1} = (I- X(z)(I+Y(z)X(z))^{-1}Y(z))$ (see, for instance, \cite[p. 199]{Conway}) to obtain that $(T-zI)R(z) = Id_H,$ as claimed.

\medskip

Now, by means of the Dunford functional calculus, we have that
$$\frac{-1}{2\pi i}\int_\T R(\xi) d\xi = Id_H \qquad \frac{-1}{2\pi i}\int_\T (D_\Lambda-\xi I)^{-1} d\xi = Id_H,$$ so
\begin{equation}\label{integral B(z)}
\frac{1}{2\pi i}\int_\T B(\xi) d\xi = 0.
\end{equation}

Having in mind \eqref{definicion B(z)}, we observe that
$$B(z)e_N = \sumn \sumk \frac{\frac{1}{\lambda_N-z} \summ \overline{\beta_N^{(m)}} a_{k,m}(z)}{\lambda_n-z} \al_n^{(k)}\, e_n$$
for every $z \in \T.$

Thus, \eqref{integral 0} turns into $$0 = \frac{1}{2\pi i} \int_{\T}B(\xi)e_N d\xi,$$ which holds by \eqref{integral B(z)} and concludes the proof.
\end{proof}

\begin{lemma}\label{lema producto}	Under the hypotheses of Theorem \ref{idempotentes acotados}, for almost every $\x \in \Delta(T)$
$$J_\x^+(H_T(F_\x^-)) = J_\x^-(H_T(F_\x^+)) = \{0\}.$$
Consequently, for almost every $\x \in \Delta(T)$,
$$J_\x^+J_\x^- = J_\x^-J_\x^+ = 0.$$
\end{lemma}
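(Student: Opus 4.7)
The key step is an integral representation of $J_\x^+$ in terms of the operator $B(\cdot)$ constructed in the proof of Lemma \ref{lema suma}. A componentwise computation of $B(\xi)x$ shows that
\[
\pe{B(\xi)x,e_n}= \sumk \frac{\al_n^{(k)}}{\lambda_n-\xi}\summ a_{k,m}(\xi)\sumj\frac{x_j\,\overline{\beta_j^{(m)}}}{\lambda_j-\xi},
\]
which is exactly the integrand in \eqref{definicion idempotentes}. Writing $P_\x^+$ for the orthogonal projection onto $\overline{\Span\{e_n: n\in N_{F_\x^+}\}}$, this identification gives
\[
J_\x^+ x = P_\x^+ x + \frac{1}{2\pi i}\int_{\gamma_\x^+} B(\xi)x\,d\xi,
\]
and similarly for $J_\x^-$. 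It therefore suffices to prove that, for almost every $\x\in\Delta(T)$ and every $x\in H_T(F_\x^-)$, the contour integral equals $-P_\x^+ x$.

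The second ingredient is the factorisation $R(\xi)=(D_\Lambda-\xi I)^{-1}-B(\xi)$ established in the proof of Lemma \ref{lema suma}: $R(\xi)$ is a right inverse of $T-\xi I$ and coincides with $(T-\xi I)^{-1}$ whenever $\xi\in\rho(T)$. For $x\in H_T(F_\x^-)$, the SVEP yields a unique analytic local resolvent function $f_x:(F_\x^-)^c\to H$, and on $\rho(T)\cap(F_\x^-)^c$ one has $R(\xi)x=f_x(\xi)$, and hence $B(\xi)x=(D_\Lambda-\xi I)^{-1}x-f_x(\xi)$ on that set.

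The heart of the proof is a contour-perturbation argument. For almost every $\x\in\Delta(T)$ one selects a sequence $\x_j\downarrow\x$ in $\Delta(T)$ such that each segment $\ell_{\x_j}$ avoids $\overline{\Lambda}$. With this choice $\gamma_{\x_j}^+\subseteq\rho(D_\Lambda)\cap(F_\x^-)^c$ and $F_{\x_j}^+\subseteq(F_\x^-)^c$ (since $\x_j>\x$), so both $(D_\Lambda-\xi I)^{-1}x$ and $f_x$ are analytic on an open neighbourhood of $F_{\x_j}^+$. The Borel/Dunford functional calculus for the normal operator $D_\Lambda$ gives
\[
\frac{1}{2\pi i}\int_{\gamma_{\x_j}^+}(D_\Lambda-\xi I)^{-1}x\,d\xi=-P_{\x_j}^+ x,
\]
while Cauchy's theorem applied to $f_x$ yields $\frac{1}{2\pi i}\int_{\gamma_{\x_j}^+}f_x(\xi)\,d\xi=0$. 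Continuity of the map $z\mapsto B(z)x$ near $\gamma_\x^+$, a consequence of Proposition \ref{continuidad} combined with the estimates in the proof of Theorem \ref{idempotentes acotados}, permits passage to the limit $j\to\infty$, producing $\frac{1}{2\pi i}\int_{\gamma_\x^+}B(\xi)x\,d\xi=-P_\x^+ x$ and hence $J_\x^+ x=0$.

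The symmetric argument with $\gamma_\x^-$ in place of $\gamma_\x^+$ shows $J_\x^-(H_T(F_\x^+))=\{0\}$. Since $\ran(J_\x^-)\subseteq H_T(F_\x^-)$ by Theorem \ref{proposicion rango} applied to $\gamma_\x^-$, the inclusion $J_\x^+J_\x^-(H)\subseteq J_\x^+(H_T(F_\x^-))=\{0\}$ yields $J_\x^+J_\x^-=0$, and symmetrically $J_\x^-J_\x^+=0$. The most delicate point is the justification of the passage to the limit in the contour integral: one needs uniform control of $B(\,\cdot\,)x$ in a thin strip around $\gamma_\x^+$, which in turn requires a version of the estimate from Theorem \ref{teorema formulas} uniform in $\x_j$. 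The ``almost every'' in the statement serves precisely to isolate those $\x$ for which such uniformity is available, excluding the negligible set where $\ell_\x$ interacts pathologically with $\overline{\Lambda}$.
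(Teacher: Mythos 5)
Your overall two-step structure (first kill $H_T(F_{\y}^-)$ for a strictly separated parameter, then pass to the limit) is the same as the paper's, but the way you implement each step has genuine gaps. The decisive one is the contour-perturbation step: you need a sequence $\x_j\downarrow \x$ in $\Delta(T)$ with $\ell_{\x_j}\cap\overline{\Lambda}=\emptyset$, and such parameters need not exist at all — if $\Lambda$ is dense in a rectangle (a perfectly admissible case here), \emph{every} vertical line through $(a,b)$ meets $\overline{\Lambda}$, so the "negligible pathological set" you propose to discard is all of $(a,b)$. Relatedly, your starting identification $J_\x^+ = P_\x^+ + \frac{1}{2\pi i}\int_{\gamma_\x^+}B(\xi)\,d\xi$ treats $B(\xi)$ as a pointwise-defined (bounded) operator on the whole curve, but $B(\xi)$ involves $(D_\Lambda-\xi I)^{-1}u_i$ and $(D_\Lambda^*-\overline{\xi}I)^{-1}v_j$, and membership $\x\in\Delta(T)$ only controls the $-1/2$ powers; this is precisely why the paper defines $B(z)$ only for $z\in\T$ in Lemma \ref{lema suma} and otherwise works with the \emph{formal} integrand of \eqref{definicion idempotentes}, whose convergence is obtained only after the integral Cauchy--Schwarz rearrangements of Theorem \ref{idempotentes acotados}. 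Finally, your passage to the limit $j\to\infty$ invokes "continuity of $B(\cdot)x$ near $\gamma_\x^+$", but what is actually needed is uniformity in $j$: bounds on the constants $C_{\x_j}^+$ of Theorem \ref{teorema formulas} and on $\sum_n\sum_k\bigl(|\al_n^{(k)}|^2+|\beta_n^{(k)}|^2\bigr)/|\PR(\lambda_n)-\x_j|$ that hold simultaneously along the sequence. Pointwise membership $\x_j\in\Delta(T)$ gives no such uniformity; the paper obtains it through Egorov's theorem, and this — not avoidance of $\overline{\Lambda}$ — is where the "almost every $\x$" in the statement comes from.

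For contrast, the paper's route never touches unbounded resolvents on the segment: it first checks, using the inversion formula \eqref{alien cofactor} (following the computations of \cite[Lemma 2.14]{GG3}), that $J_\x^+\in\conm{T}$, which already yields $J_\x^+(H_T(F_\y^-))=\{0\}$ for every $\y\in\Delta(T)$ with $\y<\x$; it then upgrades this to $\y=\x$ for almost every $\x$ by producing, inside an Egorov set where \eqref{convergencia uniforme} converges uniformly, a sequence $\x_n\to\x$ with $J_{\x_n}^+\to J_\x^+$ in the weak operator topology (uniform norm bounds plus entrywise dominated convergence, using Proposition \ref{continuidad} on the enlarged compact set). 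If you want to salvage your argument, you would have to replace the use of $B(\xi)$ on $\ell_{\x_j}$ by an argument at the level of the formal integrand (essentially reproving the commutant property), and replace your continuity claim by the Egorov-type uniformity; at that point you have reconstructed the paper's proof.
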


\medskip

 This proof is based on that  of \cite[Lemma 2.14]{GG3}, so we will highlight the differences instead of repeating some arguments.

\medskip

\begin{proof}
We are showing that $J_\x^+(H_T(F_\x^-)) = \{0\}$ and $J_\x^+J_\x^- = 0$, the other equalities are equivalent.
	
First, following the computations of the proof of \cite[Lemma 2.14]{GG3} and using \eqref{alien cofactor}, it is easy to check that $J_\x^+ \in \conm{T},$ and as a consequence $J_\x^+(H_T(F_\y^-)) = \{0\}$ for every $\y \in \Delta(T)$ and $\y<\x.$

We claim that for almost every $\x \in \Delta(T)$, there exists a sequence $(\x_n)_{n\in\N} \subset \Delta(T)$ such that $\x_n > \x$, $\x_n \searrow \x$ and $J_{\x_n}^+\rightarrow J_\x^+$ in the weak operator topology. Note that, in particular, the claim implies the equality $J_\x^+(H_T(F_\x^-)) = \{0\}$ for almost every $\x \in \Delta(T)$.

\smallskip	

In order to prove the claim, let $p \in \N$. Egorov Theorem yields the existence of a measurable subset $\X_p \subset \Delta(T)$, such that the Lebesgue measure of $\Delta(T)\setminus\X_p$ is strictly smaller than $1/p$ and the double series
	\begin{equation}\label{convergencia uniforme}
		\sumn \sumk \left(\frac{|\al_n^{(k)}|^2}{|\PR(\lambda_n)-\x|}+\frac{|\beta_n^{(k)}|^2}{|\PR(\lambda_n)-\x|}\right)
	\end{equation}
converges uniformly and is uniformly bounded on $\X_p$. Define
	$$D_p := \{z \in \overline{\D}: \PR(z)\in  \X_p\}.$$

Arguing as in Proposition \ref{continuidad} and having the uniform convergence and boundedness of \eqref{convergencia uniforme}, it follows that, indeed, the maps
$$z\in D_p \mapsto (I+Y(z)X(z))^{-1} \in \EL(H)$$
are continuous for each $p\in \N.$
	
Likewise, reasoning as in the proof of \cite[Lemma 2.14]{GG3}, it suffices to show the claim for those  $\x \in \X'_{p_0}$ where $p_0 \in \N$. In this situation, there exists a sequence $(\x_n)_{n\in\N}\subset \X_{p_0}$ such that $\x_n\nearrow \x$. Let us prove that $J_{\x_n}^+ \rightarrow J_\x^+$ in the weak operator topology.

Consider
	$$ \Gamma = \bigcup_{n \in \N} \gamma_{\x_n}^+ \cup \gamma_\x^+$$
and observe that it is a compact set. In particular, the map $z \in \Gamma \mapsto \norm{(I+Y(z)X(z))^{-1}}^2$ is continuous, so there exists $A>0$ such that the constants $C_{\x_n}^+$ given in $\eqref{acotacion condicional}$ are bounded by the constant $A$.
	
\smallskip

In order to show the weak convergence of $(J_{\x_n}^+)_{n\geq 1}$, we first check that the norm sequence $(\|J_{\x_n}^+\|)_{n\geq 1}$ is uniformly bounded. Indeed, taking into account \eqref{acotacion norma}, it is enough to recall the uniform bound of $C_{\x_n}^+$ by $A,$ and the uniform bounds of the series
$$\sumn \sumk \frac{|\al_n^{(k)}|^2}{|\PR(\lambda_n)-\x_n|} \qquad \text{ and } \qquad \sumn \sumk \frac{|\beta_n^{(k)}|^2}{|\PR(\lambda_n)-\x_n|}$$
given by the uniform convergence of \eqref{convergencia uniforme} in $\X_{p_0}.$ Finally, the constant $C>0$ given in \eqref{integral} was obtained in the proof of \cite[Theorem 2.8]{GG3}, and it  does not depend on $\x.$
Thus, $\norm{J_{\x_n}^+}$ are uniformly bounded.

\smallskip

Let us prove that $J_{\x_n}^+\rightarrow J_{\x}^+$ in the weak operator topology. Indeed, it is enough to show that
				\begin{equation}\label{convergencia debil}
					\pe{J_{\x_n}^+e_i,e_j} \rightarrow \pe{J_\x^+e_i,e_j}
				\end{equation}
				 for every $i,j\in \N.$ Now,
			\begin{equation*}
				\begin{split}
					\pe{J_{\x_n}^+e_i,e_j}
					& = \tilde{\delta}_{n,i,j}+ \sumk \frac{1}{2\pi i} \int_{\gamma_{\x_n}^+} \frac{\summ \overline{\beta_i^{(m)}}a_{k,m}(\xi)}{(\lambda_i-\xi)(\lambda_j-\xi)}d\xi \al_j^{(k)}
					\\& = \tilde{\delta}_{n,i,j}+ \frac{1}{2\pi i} \int_{\gamma_{\x_n}^+} \frac{\sumk \summ \overline{\beta_i^{(m)}}a_{k,m}(\xi)\al_j^{(k)}}{(\lambda_i-\xi)(\lambda_j-\xi)}d\xi
					\\& = \tilde{\delta}_{n,i,j}+\frac{1}{2\pi i} \int_{\gamma_{\x_n}^+} \frac{\pe{(I+Y(\xi)X(\xi))^{-1}b_i,a_j}}{(\lambda_i-\xi)(\lambda_j-\xi)}d\xi,
				\end{split}
			\end{equation*}
			where $\tilde{\delta}_{n,i,j} =1$ if $i=j \in N_{F_{\x_n}^+}$ and $0$ otherwise, $b_i = \summ \overline{\beta_i^{(m)}}e_m$ and $a_j = \sumk \overline{\al_j^{(k)}}e_k$ for every $i,j \in \N.$ Finally, observe that the map $\xi \in \Gamma \mapsto \pe{(I+Y(\xi)X(\xi))^{-1}b_i,a_j}$ is continuous, so a standard argument involving Lebesgue's Dominated Convergence Theorem shows the convergence of \eqref{convergencia debil}.

			\medskip

As a consequence, we have shown that $J_\x^+(H_T(F_\x^-)) = \{0\}$ for almost every $\x\in \Delta(T)$, which finishes the first part of the proof.
			
			\medskip
			
Finally, let $\x\in \Delta(T)$ such that $J_\x^+(H_T(F_\x^-)) = \{0\}$ and observe that $\ran(J_\x^-)\subset H_T(F_\x^-)$. It is trivial then that $J_\x^+J_\x^- = 0$, and  Lemma \ref{lema producto} is proved. 		
\end{proof}

\smallskip

We are now in position to prove the announced result of this section:

\begin{theorem}\label{teorema idempotentes}
Under the hypotheses of Theorem \ref{idempotentes acotados}, for almost every $\x\in \Delta(T)$
\begin{equation}\label{ecuacion identidades}
J_\x^++J_\x^- = Id_H, \qquad J_\x^+J_\x^- = J_\x^-J_\x^+ = 0.
\end{equation}
In particular, for almost every $\x\in \Delta(T)$ the operators $J_\x^+$ and $J_\x^-$ are  non-trivial spectral idempotents for $T$.
\end{theorem}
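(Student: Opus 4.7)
The identities \eqref{ecuacion identidades} are a direct combination of Lemma \ref{lema suma} (which gives $J_\x^+ + J_\x^- = Id_H$ for every $\x \in \Delta(T)$) and Lemma \ref{lema producto} (which gives $J_\x^+J_\x^- = J_\x^-J_\x^+ = 0$ for almost every $\x \in \Delta(T)$). Once these identities are available, $(J_\x^+)^2 = J_\x^+(Id_H - J_\x^-) = J_\x^+$ and symmetrically $(J_\x^-)^2 = J_\x^-$, so both operators are idempotents and yield the topological direct-sum decomposition $H = \ran(J_\x^+) \oplus \ran(J_\x^-)$ with both summands $T$-invariant (recall $J_\x^\pm \in \{T\}'$, as observed in the proof of Lemma \ref{lema producto}). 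Non-triviality then follows by a spectral argument: Weyl's theorem applied to the trace-class perturbation gives $\sigma_{\mathrm{ess}}(T) = \sigma_{\mathrm{ess}}(D_\Lambda) \supseteq \Lambda'$, so $\sigma(T)$ meets both open half-planes $\{\PR(z) > \x\}$ and $\{\PR(z) < \x\}$ (since $\x \in (a,b)$); combined with $\sigma(T|_{\ran(J_\x^\pm)}) \subseteq F_\x^\pm$ from Theorem \ref{proposicion rango} and with the block-diagonal identity $\sigma(T) = \sigma(T|_{\ran(J_\x^+)}) \cup \sigma(T|_{\ran(J_\x^-)})$, neither summand can be trivial, so $0 \neq J_\x^\pm \neq Id_H$.

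It remains to verify that $J_\x^+$ is a spectral idempotent, namely $J_\x^+ \in \biconm{T}$ and $\ran(J_\x^+) = H_T(\sigma(T|_{\ran(J_\x^+)}))$ (the case of $J_\x^-$ being symmetric). My plan is to strengthen Theorem \ref{proposicion rango} to the equality $\ran(J_\x^+) = H_T(F_\x^+)$. Once this equality is in hand, both spectral-idempotent conditions follow: any $S \in \{T\}'$ preserves each hyperinvariant subspace $H_T(F_\x^\pm) = \ran(J_\x^\pm)$ and therefore respects the direct-sum decomposition, commuting with the associated projection $J_\x^+$, which yields $J_\x^+ \in \biconm{T}$. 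Meanwhile, $\ran(J_\x^+) = H_T(F_\x^+) \supseteq H_T(\sigma(T|_{\ran(J_\x^+)}))$ via the inclusion $\sigma(T|_{\ran(J_\x^+)}) \subseteq F_\x^+$, while the reverse inclusion $\ran(J_\x^+) \subseteq H_T(\sigma(T|_{\ran(J_\x^+)}))$ is standard local spectral theory.

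The main obstacle is thus the reverse inclusion $H_T(F_\x^+) \subseteq \ran(J_\x^+)$. Given $y \in H_T(F_\x^+)$, writing $y = J_\x^+ y + J_\x^- y$, the component $J_\x^- y$ lies in $\ran(J_\x^-) \subseteq H_T(F_\x^-)$, and since $J_\x^- \in \{T\}'$ preserves local spectra, it also lies in $H_T(\sigma_T(y)) \subseteq H_T(F_\x^+)$; hence $J_\x^- y \in H_T(F_\x^+) \cap H_T(F_\x^-) = H_T(\ell_\x)$. Everything reduces to proving $H_T(\ell_\x) = \{0\}$ for almost every $\x \in \Delta(T)$. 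I would attack this via Theorem \ref{caracterizacion subespacios espectrales} applied with $F = \ell_\x$: since $\x \notin \PR(\Lambda)$, one has $N_{\ell_\x} = \emptyset$ and $N_{(\ell_\x)^c} = \N$, so conditions (i) and (iii) for a determining sequence $(g_k)$ of any such $y$ collapse to $x_n = \sumk g_k(\lambda_n)\al_n^{(k)}$ for every $n$ together with a Cauchy-type functional identity on the connected domain $\C \setminus \ell_\x$; combined with the decay of the local resolvent at infinity and the summability condition (iv), these force each $g_k$ to extend to an entire function vanishing at infinity, hence to be identically zero, so that $y = 0$.
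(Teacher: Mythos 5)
Your overall architecture matches the paper's: combine Lemmas \ref{lema suma} and \ref{lema producto} for the algebraic identities, upgrade Theorem \ref{proposicion rango} to the equality $\ran(J_\x^{\pm})=H_T(F_\x^{\pm})$, deduce $J_\x^{\pm}\in\biconm{T}$ from the fact that every operator in $\conm{T}$ preserves the two complementary hyperinvariant ranges, and obtain non-triviality from the invariance of the essential spectrum under compact perturbation. Those parts are sound and essentially what the paper does (the paper delegates the middle step to the argument of \cite[Corollary 2.15]{GG3}).

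The genuine gap is in your route to the reverse inclusion $H_T(F_\x^+)\subseteq\ran(J_\x^+)$. You extract from Lemma \ref{lema producto} only the consequence $J_\x^+J_\x^-=J_\x^-J_\x^+=0$, which leaves you with the residual term $J_\x^-y\in H_T(F_\x^+)\cap H_T(F_\x^-)=H_T(\ell_\x)$ and obliges you to prove $H_T(\ell_\x)=\{0\}$ from scratch. Your sketch of that step does not work as stated: the functions $g_k$ furnished by Theorem \ref{caracterizacion subespacios espectrales} for $F=\ell_\x$ are a priori analytic only on $\C\setminus\ell_\x$, and nothing in conditions $(i)$--$(iv)$ forces them to extend analytically across the segment $\ell_\x$. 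Indeed, if $y\neq 0$ then $\sigma_T(y)\neq\emptyset$, so the local resolvent $f_y$ certainly is not entire, and you give no mechanism by which the scalar functions $g_k=\pe{f_y(\cdot),v_k}$ would nevertheless continue across $\ell_\x$; the claim that they ``extend to entire functions vanishing at infinity'' is precisely the point that would need proof. The detour is also unnecessary: Lemma \ref{lema producto} asserts the stronger statement $J_\x^-(H_T(F_\x^+))=\{0\}$, not merely the product identity, so for $y\in H_T(F_\x^+)$ one gets $J_\x^-y=0$ directly and $y=J_\x^+y\in\ran(J_\x^+)$, with no need to analyze $H_T(\ell_\x)$ at all. (That $H_T(\ell_\x)=\{0\}$ then follows a posteriori from $\ran(J_\x^+)\cap\ran(J_\x^-)=\{0\}$, but it is a conclusion of the theorem, not an available tool in its proof.) With that substitution your argument closes and coincides with the paper's.
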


\smallskip

\begin{proof}
First, Lemmas \ref{lema suma} and \ref{lema producto} yields that
for almost every $\x\in \Delta(T)$ the operators $J_\x^+$ and $J_\x^-$ are idempotents satisfying \eqref{ecuacion identidades}.

Let us show that they are spectral idempotents for $T$. Let us take $\x \in \Delta(T)$ such that $J_\x^+(H_T(F_\x^-)) = \{0\}$ and $J_\x^++J_\x^- = 0$. Following the arguments of \cite[Corollary 2.15]{GG3}, it follows that $J_\x^+ \in \biconm{T}$ and $\ran(J_\x^+) = H_T(F_\x^+)$. Accordingly,  $H_T(F_\x^+)$ is a closed subspace and  $J_\x^+$ is a spectral idempotent for $T$.

Finally, let us show that $J_\x^+$ is non-trivial for every $\x\in \Delta(T)$. By the equality $J_\x^++J_\x^- = Id_H$, it is clear that if $J_\x^+$ is non-trivial, $J_\x^-$ is non-trivial as well.

Assume that $J_\x^+ =Id_H.$ If $J_\x^+=0$ then $J_\x^- = Id_H$ and the argument is analogous. Since $\ran(J_\x^+) = H_T(F_\x^+)$, it follows that $H_T(F_\x^+) = H.$ In particular, $\sigma(T) = \sigma(T\mid_{H_T(F_\x^+)}) \subseteq F_\x^+$, which is a contradiction: recall that $a< \x,$ where $a = \min\limits_{z \in \Lambda'} \PR(z)$.  So, there exists $\lambda \in \sigma(T)$ such that $\PR(\lambda)< \x$ and hence, $\lambda \notin F_\x^+.$ This ends the proof.
\end{proof}

\smallskip

\section{Proof of the Main Theorem}

Finally, we are in position to prove the Main Theorem stated in the introductory section, which we recall again for the sake of clarity:

\begin{theorem*}
Let $H$ be a separable, infinite dimensional complex Hilbert space, $\Lambda = (\lambda_n)_{n\geq 1}\subset \mathbb{C}$ a bounded sequence and $\{u_k\}_{k\geq 1}$, $\{v_k\}_{k\geq 1}$  non zero vectors in $H$. Assume
			 \begin{equation}\label{condicion---}
			 	\sum_{(n,k) \in \mathcal{N}_u} |\al_n^{(k)}|^2\log  \left( 1+ \frac{1}{|\al_n^{(k)}|}\right) + \sum_{(n,k) \in \mathcal{N}_v}|\beta_n^{(k)}|^2\log  \left(1+ \frac{1}{|\beta_n^{(k)}|}\right) < \infty,
			 \end{equation}
			 where $\mathcal{N}_u := \{(n,k) \in \N\times \N : \al_n^{(k)} \neq 0 \}$ and $\mathcal{N}_v := \{(n,k) \in \N\times \N : \beta_n^{(k)} \neq 0 \}$.
Then, the trace-class perturbation of $D_\Lambda$, $T = D_\Lambda + \sumk u_k\otimes v_k$, acting on $H$ by
\begin{equation}\label{forma---}
T x= \left (D_\Lambda + \sumk u_k\otimes v_k \right ) x= D_\Lambda x +  \sumk \pe{x,v_k} u_k, \qquad (x\in H),
\end{equation}
has non trivial closed hyperinvariant subspaces provided that it is not a scalar multiple of the identity operator. Moreover, if both point spectrum $\sigma_p(T)$ and $\sigma_p(T^*)$ are at most countable, $T$ is decomposable.
\end{theorem*}

\smallskip

\begin{proof} First of all, assume that $T$ is not a scalar multiple of the identity. For the first half, we may assume that $\sigma_p(T)\cup \sigma_p(T^*)$ is empty, since otherwise $T$ or $T^*$ will have eigenvalues and then $T$ will have non-trivial closed hyperinvariant subspaces. Moreover, one may assume that $\Lambda'$ is not a singleton: if $\Lambda' = \{ \lambda\}$, then $D_\Lambda = \lambda Id_H + \hat{K},$ where $\hat{K}$ is a compact operator. Thus, $T= \lambda I + \hat{K}+K,$ so $T$ commutes with the non-zero  compact operator $\hat{K}+K$ and Lomonosov's Theorem (see \cite{Lomonosov} or \cite[Corollary 8.24]{RR}) provides a non-trivial closed hyperinvariant subspace for $T$. In such a case, the decomposability also follows since the spectrum of $T$ is totally disconnected \cite[Proposition 1.4.5]{LN00}.

\medskip

Likewise, we also may assume that $\sigma(D_\Lambda)$ and $\sigma(T)$ are contained in $\D$ and are not contained in a vertical line, since these properties may be achieved by traslating and multiplying the operator by a scalar, which do not change the existence of non-trivial closed hyperinvariant subspaces and or the decomposability of $T$. In these conditions, we can apply Theorem \ref{teorema idempotentes} to obtain non-trivial spectral idempotents $J_\x^+$ and $J_\x^-$ for $T$ for almost every $\x\in \Delta(T)$. In particular, observe that $H_T(F_\x^+) = \ran(J_\x^+)$ and $H_T(F_\x^-)=\ran(J_\x^-)$ are non-trivial closed hyperinvariant subspaces for $T$.

\medskip

Finally, for the second half, we may assume that $\sigma_p(T)\cup \sigma_p(T^*)$ is at most countable. Then, in order to show that $T$ is decomposable, it is enough to mimic the construction of \cite[Section 3]{GG3} in order to define the non-trivial spectral idempotents for $T$ associated to horizontal lines (instead of vertical lines as $J_\x^+$) and generate a Boolean algebra of spectral idempotents associated to rectangles in $\C$. At this point, it remains to apply the arguments exposed in the proof of \cite[Theorem 3.2]{FJKP11} to obtain the decomposability of $T$, which yields the statement of the Main Theorem.
\end{proof}

\medskip

\subsection{A local version of the result}

In this subsection, we prove a local version of the Main Theorem  that will allow us to obtain non-trivial closed hyperinvariant subspaces replacing the  assumption \eqref{condicion---} by a weaker local summability condition. The result, which extends \cite[Corollary 2.8]{GG2} to trace class perturbations of diagonalizable normal operators, reads as follows:

\begin{theorem}
Let $\Lambda = (\lambda_n)_{n\geq 1}\subset \mathbb{C}$ be a bounded sequence not lying in any vertical line such that $\Lambda'$ is not a singleton  and denote  $a:= \min\limits_{z \in \Lambda'} \PR(z)$ and $b:= \max\limits_{z \in \Lambda'} \PR(z).$ Let $u_k = \sumn \al_n^{(k)}e_n$ and $v_k = \sumn \beta_n^{(k)} e_n$ be non-zero vectors in $H$ for each $k\geq 1$.  Assume that $K = \sumk u_k \otimes v_k$ is trace class and the compact perturbation  $T = D_\Lambda + K$  satisfies both $\sigma(D_\Lambda)=\overline{\Lambda}$ and $\sigma(T)$ contained in $\D$.
If there exist $a<\x_1<\x_2<b$ such that
	\begin{equation}\label{condicion local}
		\sumn \sumk \left( \frac{|\al_n^{(k)}|^2}{|\PR(\lambda_n)-\x_i|}+\frac{|\beta_n^{(k)}|^2}{|\PR(\lambda_n)-\x_i|}\right) < \infty \qquad (i=1,2).
	\end{equation}
then $T$ has non-trivial closed hyperinvariant subspaces.
\end{theorem}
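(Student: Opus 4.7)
I would follow the overall strategy of the Main Theorem but aim only at a single non-trivial closed hyperinvariant subspace, sidestepping the Egorov-based almost-everywhere argument of Lemma \ref{lema producto}. First, the standard reductions: one may assume that $T$ is non-scalar, that $\sigma_p(T)\cup\sigma_p(T^*)=\emptyset$ (else an eigenvalue of $T$ or $T^*$ yields a non-trivial closed hyperinvariant subspace at once), that $\Lambda'$ is not a singleton (else $T$ commutes with a non-zero compact operator and Lomonosov's theorem applies), and, after translating/rescaling/rotating, that $\sigma(T)\cup\sigma(D_\Lambda)\subset \D$ with $\Lambda$ not lying in any vertical line. Under these reductions both $T$ and $T^*$ enjoy the SVEP, which in particular gives $\sigma_{su}(T)=\sigma(T)$ and $H_T(\emptyset)=\{0\}$.

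With these reductions the local summability \eqref{condicion local} forces $\x_1,\x_2\in\Delta(T)$, so Theorem \ref{idempotentes acotados} supplies bounded operators $J_{\x_i}^{\pm}$ satisfying $J_{\x_i}^++J_{\x_i}^-=Id_H$ (Lemma \ref{lema suma}), $\ran J_{\x_i}^{\pm}\subseteq H_T(F_{\x_i}^{\pm})$ (Theorem \ref{proposicion rango}) and all lying in $\{T\}'$. The crucial step, and one that does \emph{not} require the almost-everywhere argument of Lemma \ref{lema producto}, is the following kill relation. For $x\in H_T(F_{\x_2}^+)$, write $y:=J_{\x_1}^- x$: since $J_{\x_1}^-\in\{T\}'$ preserves local spectra one has $\sigma_T(y)\subseteq\sigma_T(x)\subseteq F_{\x_2}^+$, while $y\in\ran J_{\x_1}^-\subseteq H_T(F_{\x_1}^-)$ forces $\sigma_T(y)\subseteq F_{\x_1}^-$. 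As $F_{\x_1}^-\cap F_{\x_2}^+=\emptyset$ and $H_T(\emptyset)=\{0\}$ by SVEP, this yields
\[
J_{\x_1}^-\bigl(H_T(F_{\x_2}^+)\bigr)=\{0\}.
\]

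My candidate non-trivial closed hyperinvariant subspace is then $M:=\overline{H_T(F_{\x_2}^+)}$, which is automatically hyperinvariant since $H_T(F_{\x_2}^+)$ is and every $S\in\{T\}'$ is continuous. To obtain $M\neq\{0\}$ it suffices to see that $J_{\x_2}^+\neq 0$: otherwise $J_{\x_2}^-=Id_H$ would give $H=H_T(F_{\x_2}^-)$, whence $\sigma(T)\subseteq F_{\x_2}^-$ (using $\sigma_{su}(T)=\sigma(T)$), contradicting that $\Lambda'\subseteq\sigma(T)$ contains points of real part $b>\x_2$. A symmetric argument applied to the left line at $\x_1$ gives $J_{\x_1}^-\neq 0$; combined with the kill relation and the continuity of $J_{\x_1}^-$, this rules out $M=H$, for otherwise $J_{\x_1}^-(H)=J_{\x_1}^-(\overline{H_T(F_{\x_2}^+)})\subseteq \overline{J_{\x_1}^-(H_T(F_{\x_2}^+))}=\{0\}$, in contradiction with $J_{\x_1}^-\neq 0$.

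The principal obstacle is that with only the two points $\x_1,\x_2$ available in $\Delta(T)$ one cannot reproduce the Egorov-type approximation on which Lemma \ref{lema producto} (and hence the spectral idempotent character of $J_{\x}^\pm$) rests; I do not expect that any of the $J_{\x_i}^{\pm}$ be an idempotent in this setting. The key insight to overcome this is that full idempotency is overkill for the mere existence of a single non-trivial closed hyperinvariant subspace: it is enough to work directly with the closure of the spectral manifold $H_T(F_{\x_2}^+)$, invoking only that each $J_{\x_i}^{\pm}$ lies in $\{T\}'$ with the expected range inclusion and that SVEP of both $T$ and $T^*$ precludes the degenerate possibilities $H_T(F_{\x_1}^+)=H$ and $H_T(F_{\x_2}^-)=H$.
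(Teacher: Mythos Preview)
Your argument is correct and follows the same overall strategy as the paper: after the standard reductions, build the operators $J_{\x_i}^{\pm}$ from Theorems \ref{idempotentes acotados} and \ref{proposicion rango}, use $J_{\x_i}^++J_{\x_i}^-=Id_H$ (Lemma \ref{lema suma}) to see that $H_T(F_{\x_2}^+)\neq\{0\}$, and take $M=\overline{H_T(F_{\x_2}^+)}$ as the candidate subspace.

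The one genuine difference is in how non-density of $H_T(F_{\x_2}^+)$ is obtained. The paper repeats the whole construction for the adjoint $T^*$ to show $H_{T^*}(F_{\x_1}^-)\neq\{0\}$, and then invokes the duality result \cite[Proposition~2.5.1]{LN00} (disjointness of $F_{\x_1}^-$ and $F_{\x_2}^+$ forces $H_T(F_{\x_2}^+)$ to be non-dense). You instead stay entirely on the $T$-side: using $J_{\x_1}^-\in\{T\}'$ (established at the start of the proof of Lemma \ref{lema producto}, before any Egorov argument) together with the range inclusion $\ran J_{\x_1}^-\subseteq H_T(F_{\x_1}^-)$ and SVEP, you derive the kill relation $J_{\x_1}^-(H_T(F_{\x_2}^+))=\{0\}$, and then continuity of $J_{\x_1}^-$ plus $J_{\x_1}^-\neq 0$ rules out $M=H$. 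Your route is a bit more self-contained (no second construction for $T^*$, no external citation for the non-density step); the paper's route highlights the symmetric role of $T^*$ and ties the argument to the standard local-spectral duality machinery.
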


\smallskip

\begin{proof}
We may assume that $\sigma_p(T)\cup\sigma_p(T^*)$ is empty. A careful reading of the proofs of Theorems \ref{idempotentes acotados} and \ref{proposicion rango} yields that the operators $J_{\x_2}^-$ and $J_{\x_2}^+$ are well-defined bounded operators such that $\ran(J_{\x_2}^-)\subset H_T(F_{\x_2}^-)$ and $\ran(J_{\x_2}^+)\subset H_T(F_{\x_2}^+)$.

Moreover, arguing as in the proof of Lemma \ref{lema suma}, it follows that $J_{\x_2}^++J_{\x_2}^- = Id_H$. Likewise,  the arguments shown in the proof of Theorem \ref{teorema idempotentes} yield that $J_{\x_2}^+$ is a non-trivial operator. We remark  that we cannot assure these operators are idempotent, since the uniform convergence of the series \eqref{convergencia uniforme} is used to prove Lemma \ref{lema producto}. Therefore, we know that $H_T(F_{\x_2}^+)$ is a non-zero spectral subspace. At this point, observe that the same construction can be carried out for the adjoint $T^*$, since it also verifies \eqref{condicion local}. As a consequence, $H_{T^*}(F_{\x_1}^-)$ is also a non-trivial spectral subspace.
	
Finally, by means of \cite[Proposition 2.5.1]{LN00}, it follows that $H_T(F_{\x_2}^+)$ is non-dense, since $H_{T^*}(F_{\x_1}^-)$ is non-zero and $F_{\x_1^-}\cap F_{\x_2^+} = \emptyset.$ Accordingly,  $\overline{H_T(F_{\x_2}^+)}$ is a non-trivial closed hyperinvariant subspace for $T$  which concludes the proof.
\end{proof}

\medskip

\section*{An open question}

In order to conclude, we note that the Main Theorem does not include every operator $T$ in the class $(\mathcal{N}+\mathcal{C}_1)$, that is, the class of trace class perturbations of diagonalizable normal operators. Therefore, we close this manuscript with the following question:

\medskip

\begin{question}
Let $H$ be a separable, infinite dimensional complex Hilbert space.  Does every operator $T$ in the class $(\mathcal{N}+\mathcal{C}_1)$  have non-trivial closed hyperinvariant subspaces? If so, if $\sigma_p(T)\cup \sigma_p(T^*)$ is at most countable, is $T$  decomposable?
\end{question}

\medskip

\section*{Acknowledgements}

The authors would like to express their gratitude to the referee for their meticulous review and constructive feedback, which significantly enhanced the quality of the manuscript.

\end{document}